\documentclass[11pt]{article}

\usepackage{amsmath,amssymb,amsthm,mathrsfs}
\usepackage{hyperref}
\usepackage{geometry}
\usepackage{enumitem}
\usepackage{mathtools}
\usepackage{tikz}
\usetikzlibrary{positioning,decorations.pathreplacing,calc}
\newtheorem{theorem}{Theorem}[section]
\newtheorem{lemma}[theorem]{Lemma}
\newtheorem{proposition}[theorem]{Proposition}
\newtheorem{corollary}[theorem]{Corollary}
\newtheorem{problem}[theorem]{Problem}
\newtheorem{observation}[theorem]{Observation}
\newtheorem{example}[theorem]{Example}
\theoremstyle{definition}
\newtheorem{definition}[theorem]{Definition}
\theoremstyle{remark}
\newtheorem{remark}[theorem]{Remark}

\newcommand{\RR}{\mathbb{R}}

\newcommand{\dO}{\delta\Omega}
\newcommand{\pO}{\partial\Omega}
\newcommand{\Om}{\Omega}

\title{Effect of edge-stretching on Steklov eigenvalues \\and sharp Steklov eigenvalue bounds  on leaf--boundary trees}
\author{
    Jiangdong Ai\thanks{School of Mathematical Sciences and LPMC, Nankai University, Tianjin 300071, China. {\tt jd@nankai.edu.cn}. Funded by the National Natural Science Foundation of China (No.12522117, No.12401456), the Natural Science Foundation of Tianjin (No.24JCQNJC01960) and Fundamental and Interdisciplinary Disciplines Breakthrough Plan of the Ministry of Education of China (JYB2025XDXM207).}
    \hspace{2mm}
    Yizhe Ji\thanks{School of Mathematical Sciences, Nankai University, Tianjin 300071, China. {\tt jyz\_math@nankai.edu.cn}.}
    \hspace{2mm}
    Xiaopan Lian\thanks{Center for Combinatorics and LPMC, Nankai University, Tianjin 300071, China. {\tt{Lian@nankai.edu.cn.}}}
    \hspace{2mm}
    Kun Yang\thanks{School of Mathematical Sciences, Nankai University, Tianjin 300071, China. {\tt 2313878@mail.nankai.edu.cn}.}
}
\date{}

\begin{document}
\maketitle

\begin{abstract}
Let $T$ be a finite tree with leaf set $\dO$ as the boundary and let $\lambda_2$ be the first nontrivial Steklov eigenvalue.
Let $D$ and $\ell$ be the maximum vertex degree and the number of leaves, respectively. Motivated by the spectral influence of neck-stretching on Riemannian manifolds, we investigate a discrete counterpart--edge-stretching--and its effect on the Steklov eigenvalues of graphs. We prove that Steklov eigenvalues decrease monotonically under the edge--stretching operation. As a consequence, we prove that $\lambda_2\le D/\ell$, with equality if and only if $T$ is a star.
This fundamentally improves the constant in He--Hua's bound $\lambda_2\le 4(D-1)/\ell$ to the optimal value~$1$.

We also provide a closed-form diagonalization of the Steklov problem on level--regular trees,
yielding explicit eigenvalues and multiplicities.
In addition, we provide a general upper bound $\lambda_k\le \min\{1,\,16Dk/\ell\}$ for higher eigenvalues.
Systematic numerical experiments verify the sharp bound and provide evidence for the extremal conjecture of Lin--Zhao on balanced minimum--height trees.
\end{abstract}

\medskip
\noindent\textbf{Mathematics Subject Classification:} 05C10, 47A75, 49J40, 49R05

\medskip
\noindent\textbf{Keywords.} Steklov eigenvalues, Dirichlet-to-Neumann, Kron reduction, electric networks, trees, extremal problems.

\section{Introduction}\label{sec:intro}

Steklov~\cite{Stekloff1902} considered the problem of liquid sloshing and introduced Steklov eigenvalues and Steklov operators for bounded domains in Euclidean spaces. For a compact smooth orientable Riemannian manifold $\Om$ with boundary $\partial\Om$, the \emph{Dirichlet-to-Neumann operator} (abbreviated as DtN) $\mathcal{D}:C^\infty(\pO)\rightarrow C^\infty(\pO)$ is defined as a mapping from a boundary function $g\in C^\infty(\pO)$ to the normal derivative $\mathcal{D}f=\dfrac{\partial f}{\partial n}$ of its harmonic extension $f\in C^\infty(\Om)$. The eigenvalues of $\mathcal{D}$, known as \emph{Steklov eigenvalues}, form a discrete sequence
\[
0=\lambda_1(\Om,\pO)\le \lambda_2(\Om,\pO)\le \cdots\rightarrow\infty.
\]
These eigenvalues encode the interaction between the interior geometry of the manifold and the geometry of its boundary, and have been widely studied in spectral geometry. Recent developments on the Steklov problem are summarized in~\cite{CGGS2024,GirouardPolterovich2017}. 

A fundamental problem in spectral geometry is to analyze how geometric deformations of a manifold influence its spectral invariants. In the context of Steklov eigenvalues, an important class of deformations commonly referred to as \emph{neck stretching} involves attaching long cylindrical regions near the boundary. In this construction, a cylindrical region of the form $[0,L]\times\Sigma$ is inserted along a boundary component, and the spectral behavior is studied as the length $L$ increases. See Figure~\ref{fig:neckstretch}. Such geometric modifications alter the Dirichlet-to-Neumann operator and lead to changes in the Steklov spectrum, and they have been used in several works to investigate extremal eigenvalue problems and degenerating families of manifolds. See~\cite{GirouardHenrotLagace2021,Thibault2024} for more details.

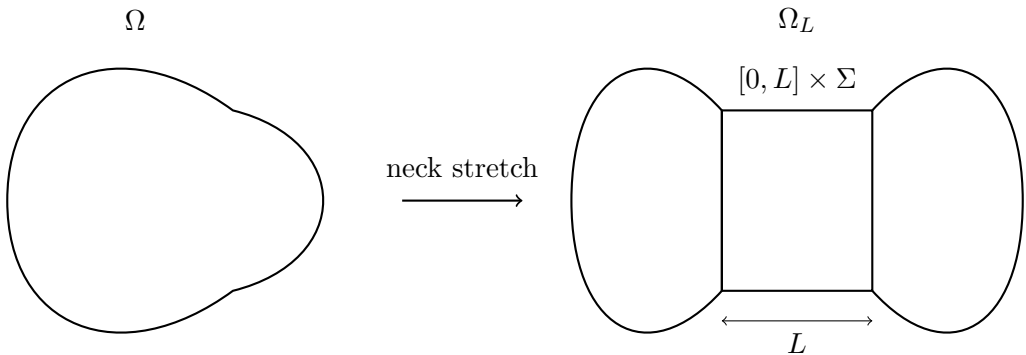
\begin{figure}[ht]\label{fig:neckstretch}
\centering
\begin{tikzpicture}[scale=1]

\draw[thick]
(-3-0.5,0)
.. controls (-3-0.5,1.7) and (-1.5-0.5,2.3) .. (0-0.5,1.2)
.. controls (1.6-0.5,0.8) and (1.6-0.5,-0.8) .. (0-0.5,-1.2)
.. controls (-1.5-0.5,-2.3) and (-3-0.5,-1.7) .. (-3-0.5,0);

\node at (-1.3-0.5,2.4) {$\Omega$};

\draw[->,thick] (1+0.75,0) -- (2.6+0.75,0);
\node at (1.8+0.75,0.45) {neck stretch};

\draw[thick]
(4,0)
.. controls (4,1.7) and (5,2.3) .. (6,1.2)
-- (6,-1.2)
.. controls (5,-2.3) and (4,-1.7) .. (4,0);

\draw[thick] (6,1.2) rectangle (8,-1.2);

\node at (7,1.6) {$[0,L]\times\Sigma$};

\draw[<->] (6,-1.6) -- (8,-1.6);
\node at (7,-1.9) {$L$};

\draw[thick]
(8,1.2)
.. controls (9,2.3) and (10,1.7) .. (10,0)
.. controls (10,-1.7) and (9,-2.3) .. (8,-1.2);

\node at (7,2.4) {$\Omega_L$};

\end{tikzpicture}

\caption{Neck stretching obtained by inserting a cylindrical region
$[0,L]\times\Sigma$.}
\end{figure}

The discrete analogue of the Steklov problem has recently attracted significant interest. Foundational work by Hua--Huang--Wang~\cite{HuaHuangWang2017}, Hassannezhad--Miclo~\cite{HassannezhadMiclo2020} and
He--Hua~\cite{HeHua2022Upper} established key spectral properties on graphs, while in~\cite{HeHua2022Flows} they obtained crucial upper bound estimates and proved the monotonicity of the first Steklov eigenvalue under specific tree flows.
Lin--Zhao~\cite{LinZhaoPlanar} extended these results to planar graphs and block graphs. There are some other results with different graph parameters in~\cite{ChenShiZhang2025,LinLYZ2026, LinZhaoConnectivity,LinZhaoTrees,Perrin2019}.

A graph $G=(V,E)$ is a tuple of the vertex set $V$ and the edge set $E$.
In this paper, all graphs are simple.
The boundary of the graph, denoted by $\dO$, is chosen as a non-empty subset of~$V$,
and we use $\Om=V\setminus \dO$ for the set of remaining vertices.
For a real function $f\in\RR^V$, we define the Laplacian operator $\Delta f$ by
\begin{equation}\label{eq:lap}
(\Delta f)(x)=\sum_{(x,y)\in E}\bigl(f(x)-f(y)\bigr),
\end{equation}
and the outward derivative operator $\frac{\partial f}{\partial n}$ by
\begin{equation}\label{eq:normal}
\frac{\partial f}{\partial n}(x)=\sum_{\substack{y\in V\\(x,y)\in E}}\bigl(f(x)-f(y)\bigr),\qquad x\in\dO.
\end{equation}
The Steklov problem solves the following equations for a real function $f\in\RR^V$ and a real number $\lambda\in\RR$:
\begin{equation}\label{eq:steklov}
\begin{cases}
\Delta f(x)=0, & x\in\Om,\\[2pt]
\dfrac{\partial f}{\partial n}(x)=\lambda f(x), & x\in\dO.
\end{cases}
\end{equation}
There are $|\dO|$ Steklov eigenvalues, arranged as
\[
0=\lambda_1(G,\dO)\le \lambda_2(G,\dO)\le \cdots\le \lambda_{|\dO|}(G,\dO).
\]
We abbreviate $\lambda_i(G,\dO)$ as $\lambda_i$ when the context is clear.
The Steklov eigenvalues can be characterized by the Rayleigh quotient:
\begin{equation}\label{eq:minmax}
\lambda_k=\min_{\substack{W\subseteq\RR^{\dO}\\\dim W=k}}\ \max_{0\neq g\in W}\ \mathcal{R}_{G,\dO}(g),\qquad
\mathcal{R}_{G,\dO}(g)=\frac{\mathcal{E}_G(\widehat{g})}{\sum_{x\in\dO}g(x)^2},
\end{equation}
where $\widehat{g}$ denotes the harmonic extension of $g$ to~$V$ (see Section~\ref{sec:proofs} for definitions). 

A tree is a connected graph without cycles.
The leaf set is the set of vertices of degree one.
Throughout this paper, we consider trees with the leaf set as the boundary.

In view of the geometric deformation described above, it is natural to search for graph operations that mimic the effect of stretching regions of a manifold. The following definition can be regarded as a natural discrete analogue of neck-stretching. 

\begin{definition}\label{def:stretch}
Let $(G,\dO)$ be a graph with boundary and let $e=(u,v)\in E$.
For an integer $L\ge 1$, the \emph{$L$--stretch} of $e$ is the graph $G^{(L)}$ obtained by replacing $uv$ with a path $u=w_0\sim w_1\sim \cdots\sim w_L=v$ of length $L$, where $w_1,\dots,w_{L-1}$ are new interior vertices.
The boundary set is unchanged.
See Figure~\ref{fig:stretch}.
\end{definition}

\begin{figure}[ht]
\centering
\begin{tikzpicture}[
    vertex/.style={circle, draw, fill=black, inner sep=1.5pt},
    bvertex/.style={circle, draw, fill=white, inner sep=1.5pt},
    newvertex/.style={circle, draw, fill=gray!50, inner sep=1.5pt},
    every edge/.style={draw, thick},
    >=stealth
]
\node at (-1.5-0.65,0.8) {\textbf{Original}};
\node[vertex, label=below:{$u$}] (u) at (-2-0.65,0) {};
\node[vertex, label=below:{$v$}] (v) at (-1-0.65,0) {};
\node[bvertex] (a) at (-2.7-0.65,0.5) {};
\node[bvertex] (b) at (-2.7-0.65,-0.5) {};
\node[bvertex] (c) at (-0.3-0.65,0.5) {};
\node[bvertex] (d) at (-0.3-0.65,-0.5) {};
\draw (a)--(u); \draw (b)--(u);
\draw (c)--(v); \draw (d)--(v);
\draw[very thick, red] (u)--(v);

\draw[->, thick] (0.3-0.5,0) -- (1.3,0) node[midway, above] {$3$-stretch};

\node at (3.5+0.65,0.8) {\textbf{After stretching}};
\node[vertex, label=below:{$u$}] (u2) at (2+0.65,0) {};
\node[newvertex, label=below:{$w_1$}] (w1) at (3+0.65,0) {};
\node[newvertex, label=below:{$w_2$}] (w2) at (4+0.65,0) {};
\node[vertex, label=below:{$v$}] (v2) at (5+0.65,0) {};
\node[bvertex] (a2) at (1.3+0.65,0.5) {};
\node[bvertex] (b2) at (1.3+0.65,-0.5) {};
\node[bvertex] (c2) at (5.7+0.65,0.5) {};
\node[bvertex] (d2) at (5.7+0.65,-0.5) {};
\draw (a2)--(u2); \draw (b2)--(u2);
\draw (c2)--(v2); \draw (d2)--(v2);
\draw[very thick, red] (u2)--(w1)--(w2)--(v2);
\end{tikzpicture}
\caption{The $3$--stretch of an edge $e=(u,v)$: the edge is replaced with a path of length~$3$ through new interior vertices $w_1,w_2$ (gray).}
\label{fig:stretch}
\end{figure}
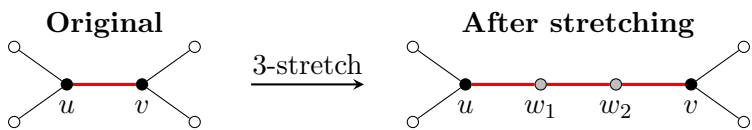

From the geometric perspective, this operation increases the effective distance between vertices in the graph and can be viewed as a combinatorial analogue of inserting a long cylindrical region in the continuous setting.

The first result concerns the monotonicity of the Steklov spectrum under edge stretching.

\begin{theorem}\label{thm:stretch}
Let $G^{(L)}$ be obtained from $(G,\dO)$ by $L$--stretching a single edge.
Then for every $k=1,\dots,|\dO|$,
\begin{equation}\label{eq:stretch}
\lambda_k(G^{(L)},\dO)\le \lambda_k(G,\dO).
\end{equation}
\end{theorem}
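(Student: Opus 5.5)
The plan is to compare Rayleigh quotients in the min--max characterization \eqref{eq:minmax}, using the fact that both graphs share the same boundary $\dO$. The harmonic extension $\widehat{g}$ minimizes the Dirichlet energy $\mathcal{E}_G(f)=\sum_{(x,y)\in E}(f(x)-f(y))^2$ among all $f\in\RR^V$ with $f|_{\dO}=g$. So it suffices to show, for every boundary function $g\in\RR^{\dO}$,
\[
\mathcal{E}_{G^{(L)}}\bigl(\widehat{g}^{(L)}\bigr)\le \mathcal{E}_G(\widehat{g}),
\]
where $\widehat{g}^{(L)}$ is the harmonic extension in $G^{(L)}$. The denominators $\sum_{x\in\dO}g(x)^2$ coincide, so $\mathcal{R}_{G^{(L)},\dO}(g)\le \mathcal{R}_{G,\dO}(g)$ pointwise on $\RR^{\dO}$. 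Taking, for each $k$, the $k$-dimensional subspace $W$ that is optimal for $G$ then gives $\lambda_k(G^{(L)},\dO)\le \max_{g\in W}\mathcal{R}_{G^{(L)},\dO}(g)\le \lambda_k(G,\dO)$.

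For the energy inequality I would build a competitor on $G^{(L)}$ directly from $f=\widehat{g}$ on $G$. Keep $f$ on all original vertices $V$, and on the new vertices interpolate linearly:
\[
F(w_i)=f(u)+\tfrac{i}{L}\bigl(f(v)-f(u)\bigr),\qquad i=0,\dots,L.
\]
Then $F$ agrees with $g$ on $\dO$, because the new vertices are interior and $u,v$ retain their values. Every edge other than $uv$ contributes the same amount as before. The path replacing $uv$ contributes
\[
L\cdot\Bigl(\tfrac{f(v)-f(u)}{L}\Bigr)^2=\tfrac{1}{L}\bigl(f(u)-f(v)\bigr)^2\le \bigl(f(u)-f(v)\bigr)^2 .
\]
Hence $\mathcal{E}_{G^{(L)}}(F)\le\mathcal{E}_G(f)$, and by the Dirichlet principle on $G^{(L)}$ we get $\mathcal{E}_{G^{(L)}}(\widehat{g}^{(L)})\le \mathcal{E}_{G^{(L)}}(F)$. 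Physically, this is Rayleigh monotonicity: stretching replaces a unit resistor by one of resistance $L$.

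The only point needing care, which I expect to be the mild obstacle, is justifying the Dirichlet principle (energy minimality of the harmonic extension) in the graph setting. I would argue by convexity: $\mathcal{E}$ is a convex quadratic in the interior values, and its first-order condition at an interior vertex $x$ is exactly $\Delta f(x)=0$. Existence and uniqueness of the minimizer hold when every component of $G$ meets $\dO$; otherwise one simply takes the minimal energy, which is all the Rayleigh quotient uses. It also helps to note that the min--max \eqref{eq:minmax} holds equally with $\mathcal{E}(\widehat{g})$ replaced by $\min_{f|_{\dO}=g}\mathcal{E}(f)$, so the argument never needs the explicit form of the harmonic extension on $G^{(L)}$.
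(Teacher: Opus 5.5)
Your proposal is correct and follows the paper's proof essentially step for step. The shared steps are the linearly interpolated competitor on the stretched path, the energy drop from $(f(u)-f(v))^2$ to $(f(u)-f(v))^2/L$, the Dirichlet principle (Lemma~\ref{lem:harmmin}) on $G^{(L)}$, and the pointwise Rayleigh-quotient comparison fed into the min--max principle. Your added convexity justification of the Dirichlet principle is a harmless supplement to the paper's one-line proof of that lemma.
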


\begin{corollary}\label{cor:deg2}
Let $T$ be a tree with leaf boundary $\dO$.
If $x\in\Om$ is an interior vertex with $\deg(x)=2$, let $T_{\mathrm{ctr}}$ be obtained by contracting $x$.
Then $\lambda_k(T,\dO)\le \lambda_k(T_{\mathrm{ctr}},\dO)$ for every $k$.
In particular, any tree maximizing $\lambda_2$ within a family of trees with prescribed leaf number and degree bound may be assumed to have no interior degree--two vertex.
\end{corollary}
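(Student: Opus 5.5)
Corollary~\ref{cor:deg2} will follow from Theorem~\ref{thm:stretch} applied with $L=2$. Let $x\in\Om$ have $\deg(x)=2$, with neighbours $u$ and $v$. Since $T$ is a tree, $u\neq v$ and $u,v$ are not adjacent. Contracting $x$ removes $x$ and its two edges and adds the edge $uv$. This gives a tree $T_{\mathrm{ctr}}$.

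Next we check that $T_{\mathrm{ctr}}$ has the same boundary. Every vertex other than $x$ keeps its degree. In particular $u$ and $v$ each lose the edge to $x$ and gain the edge $uv$. Hence the leaf set of $T_{\mathrm{ctr}}$ is exactly $\dO$, and $(T_{\mathrm{ctr}},\dO)$ is again a tree with leaf boundary.

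Now apply the $2$--stretch of Definition~\ref{def:stretch} to the edge $uv$ of $T_{\mathrm{ctr}}$. It inserts one new interior vertex $w_1$ on a path $u\sim w_1\sim v$. The result is isomorphic to $T$ via $w_1\mapsto x$, and the isomorphism fixes $\dO$ pointwise. The Steklov spectrum depends only on the graph together with its boundary set, so Theorem~\ref{thm:stretch} gives $\lambda_k(T,\dO)=\lambda_k(T_{\mathrm{ctr}}^{(2)},\dO)\le \lambda_k(T_{\mathrm{ctr}},\dO)$ for every $k$.

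For the \emph{in particular} statement, we need the family to be closed under contraction. Contraction preserves the number of leaves. It also does not increase the maximum degree: all remaining degrees are unchanged and a vertex of degree $2$ is removed. So if $T$ maximizes $\lambda_2$ and has an interior degree--two vertex, then $T_{\mathrm{ctr}}$ lies in the same family and has $\lambda_2$ at least as large. Hence $T_{\mathrm{ctr}}$ is also a maximizer, and it has strictly fewer vertices.

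Iterating this, we must terminate, because each contraction lowers the vertex count. The last tree is a maximizer with no interior vertex of degree $2$. No step here is a real obstacle. The only points needing care are that contraction keeps the tree simple, which holds since $u$ and $v$ were non-adjacent, and that it preserves the leaf set.
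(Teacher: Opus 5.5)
Your proposal is correct and follows the paper's proof: $T$ is the $2$--stretch of the edge $uv$ in $T_{\mathrm{ctr}}$ (with the new vertex playing the role of $x$), so Theorem~\ref{thm:stretch} applies directly. Your additional checks fill in details the paper leaves implicit: that contraction yields a simple tree with the same leaf set $\dO$, that it does not raise the maximum degree, and that iterating contractions terminates at a maximizer with no interior degree--two vertex.
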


By Theorem~\ref{thm:stretch} and Corollary~\ref{cor:deg2}, we obtain a sharp upper bound for the first Steklov eigenvalue, which is the second main result of this paper.

\begin{theorem}\label{thm:lambda2sharp}
Let $T=(V,E)$ be a tree with leaf boundary $\dO$.
Let $D\ge 2$ be the maximum degree of $T$ and let $\ell=|\dO|$ be the number of leaves.
Then
\begin{equation}\label{eq:mainbound_intro}
\lambda_2\le \frac{D}{\ell}.
\end{equation}
Moreover, equality holds if and only if $T$ is a star.
\end{theorem}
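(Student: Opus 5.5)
The plan is to bound $\lambda_2$ from above by a test function that is piecewise constant on the branches at a single well-chosen interior vertex. This reduces the tree to a weighted star whose Steklov problem can be computed by hand.

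\textbf{Step 1 (star case).} If $T$ is a star with centre $c$, then $D=\ell$. Every $g\perp 1$ on $\dO$ has harmonic extension with $\widehat g(c)=\frac1\ell\sum g=0$, so $\mathcal{E}(\widehat g)=\sum_{x\in\dO}g(x)^2$. Hence $\lambda_2=1=D/\ell$, and equality holds for stars.

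\textbf{Step 2 (normalisation).} By Corollary~\ref{cor:deg2}, contracting interior degree-two vertices does not decrease any $\lambda_k$. It also leaves $D$ and $\ell$ unchanged, except when the tree is a path, where the statement is trivial. So I may assume every interior vertex has degree at least $3$.

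\textbf{Step 3 (choice of vertex and branches).} Choose an interior vertex $c$ of degree $d$, and let $T_1,\dots,T_d$ be the components of $T-c$, with $\ell_1+\dots+\ell_d=\ell$ their leaf counts. I want $c$ to be a ``leaf-centroid'', meaning $\ell_i\le \ell/2$ for all $i$. Such a vertex exists by walking towards the branch carrying the majority of leaves; if the walk stalls on an edge, I would use the edge version of the same construction.

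\textbf{Step 4 (test functions and the weighted star).} For $a\in\RR^d$ and $t\in\RR$, set $f\equiv a_i$ on $T_i$ and $f(c)=t$. By the Dirichlet principle $\mathcal{E}(\widehat g)\le \mathcal{E}(f)$, where $g=f|_{\dO}$, so
\[
\lambda_2\le \min_{a,t}\ \frac{\sum_{i=1}^d (a_i-t)^2}{\sum_{i=1}^d \ell_i\,(a_i-m)^2},\qquad m=\frac1\ell\sum_i \ell_i a_i .
\]
Here the shift by $m$ enforces orthogonality to constants without changing the energy. The right-hand side is exactly $\lambda_2$ of a star with $d$ boundary vertices carrying masses $\ell_i$, and its spectrum is explicit. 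The eigenvalues are the roots of the secular equation obtained by eliminating $t=\frac1d\sum a_i$, namely the roots of $\sum_i \frac{1}{1-\mu\ell_i}=\dots$. Equivalently, I can test directly with $a_i=\alpha/\ell_i$ on two branches, or use the two largest branches.

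\textbf{Step 5 (main obstacle).} The hard step is showing that this weighted-star eigenvalue is at most $d/\ell\le D/\ell$, with strict inequality unless every $\ell_i=1$. A naive averaging (trace) argument over the subspace $\sum\ell_ia_i=0$ with $t=0$ fails, because it needs $\sum\ell_i^3/\sum\ell_i^2\le \ell/d$, which goes the wrong way by Chebyshev. So the free centre value $t$ must be used essentially.

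What I would try first is the following. Relax the mass-weighted star to the unweighted star on $\ell$ leaves, by splitting each branch $T_i$ into $\ell_i$ leaves attached directly to $c$. For the unweighted star one has $\lambda_2=1$, and the energy of its test functions transfers through Step 4. Then compare the two via the interlacing provided by Theorem~\ref{thm:stretch}, viewing a branch as a stretched and merged version of pendant edges. The goal is the inequality $\lambda_2\le \max_i\frac{1}{\ell_i}\cdot\frac{d\,\ell_i}{\ell}$.

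\textbf{Step 6 (equality).} If the balanced choice of $c$ has some $\ell_i\ge 2$, then the secular equation gives strict inequality. Otherwise all branches are single leaves, which after Step 2 forces $T$ to be a star.
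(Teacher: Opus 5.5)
Your Step 4 reduction to the mass-weighted star at a leaf centroid is sound, and the paper's test function lies in your family. But the proposal stops exactly where the proof has to happen: Step 5 contains no argument.

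\textbf{The inequality (Step 5).} Your target $\lambda_2\le\max_i\frac{1}{\ell_i}\cdot\frac{d\ell_i}{\ell}$ is identically $d/\ell$, so it restates the goal rather than reducing it. The suggested comparison with the unweighted star cannot produce it. Theorem~\ref{thm:stretch} only bounds a stretched graph's eigenvalues by those of the unstretched graph, and the unweighted star has $\lambda_2=1$, so the most you could extract is Corollary~\ref{cor:leq1}. Moreover, merging $\ell_i$ pendant edges into one branch is not an edge stretch.

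The explicit alternative $a=(1/\ell_1,-1/\ell_2,0,\dots)$ can fail. On the tree $T_1$ of Observation~\ref{obs:mismatch} (centroid split $(4,2,2)$, $\ell=8$, $d=3$), this choice with optimal $t$ gives the quotient $7/18$, which exceeds $3/8$.

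The missing idea is to test with the indicator of the branch carrying the most leaves: $a=\mathbf{1}_{\{i_0\}}$ with $s=\ell_{i_0}=\max_i\ell_i$. Modulo constants, this is the paper's two-valued function, equal to $1$ on $T_{i_0}$ and $-s/(\ell-s)$ elsewhere.
\begin{itemize}
\item The optimal centre value is $t=1/d$, the energy is $(d-1)/d$, and the denominator is $s(\ell-s)/\ell$. Hence $\lambda_2\le\frac{d-1}{d}\cdot\frac{\ell}{s(\ell-s)}$.
\item Pigeonhole gives $s\ge\ell/d$, and the centroid property gives $s\le\ell/2$.
\item Since $x\mapsto\ell/(x(\ell-x))$ is decreasing on $(0,\ell/2]$, the bound is at most $\frac{d-1}{d}\cdot\frac{d^2}{(d-1)\ell}=\frac{d}{\ell}\le\frac{D}{\ell}$.
\end{itemize}

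\textbf{The equality case (Steps 2 and 6).} Step 6 is false as stated. For $T(3,2)$ of Example~\ref{ex:nonconstant}, the leaf centroid is the root and all $\ell_i=2$. The weighted-star eigenvalue there is exactly $3/6=D/\ell$, so the secular equation gives no slack, although $\lambda_2=1/3$.

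Strictness has to come from the Dirichlet-principle step instead. Equality forces $s=\ell/d$, so all $\ell_i$ are equal, and it forces $\mathcal{E}_T(\widehat g)=\mathcal{E}_T(f)$. Hence the piecewise-constant $f$ must itself be harmonic on $\Om$. But $f(c)=1/d$ differs from the branch values $0$ and $1$. So harmonicity fails at the neighbour of $c$ in any branch that contains an interior vertex, and every branch must be a single leaf.

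Step 2 causes a further problem. Because it replaces $T$ by its contraction, your argument could at best show that $T$ is a subdivided star. You would then still need a strictness statement for subdivision (for example Proposition~\ref{prop:strict} with $k=2$ applied to one stretched leg), which you do not give. The normalization is unnecessary: the harmonicity argument applied to $T$ itself already excludes subdivided legs.

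A minor point: no edge version of the centroid is needed, since your walk always stops at a vertex.
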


Compare~\eqref{eq:mainbound_intro} with He--Hua's bound $\lambda_2\le 4(D-1)/\ell$ \cite[Theorem~1.1]{HeHua2022Upper} and Lin--Zhao's bound $\lambda_2\le 8D/\ell$ for planar graphs \cite[Theorem~1.1]{LinZhaoPlanar}.
Theorem~\ref{thm:lambda2sharp} improves the constant from $4$ to the optimal value $1$ for trees.

The bound $\lambda_2\le 4(D-1)/\ell$ of He--Hua~\cite{HeHua2022Upper} is derived via an isoperimetric (Cheeger--type) inequality for the Steklov problem.
Such inequalities involve a squaring step: one bounds $\lambda_2$ by the Cheeger ratio and then estimates the Cheeger ratio separately, which introduces a multiplicative constant.
A key novelty of our approach is that it bypasses the Cheeger inequality entirely, thereby avoiding the multiplicative constants inherent in Cheeger-type estimates. Instead, we work directly with the Rayleigh quotient at a leaf centroid. By constructing an explicit two-value test function whose energy can be optimized in closed form (Lemma~\ref{lem:vertex2value}), we achieve the optimal constant.
The key observation is that on a tree, removing a single vertex splits the boundary into components whose sizes are controlled by the centroid property; combining this with the convexity of the energy functional at a vertex yields the sharp constant without any intermediate inequality.

The third result gives explicit spectra for level--regular trees, which provide a large family of test candidates with closed formulas.

\begin{definition}\label{def:levelregular}
Fix $h\ge 1$ and integers $m_0,\dots,m_{h-1}\ge 1$.
A rooted tree $T(\mathbf{m})$ of height $h$ is \emph{level--regular with branching sequence $\mathbf{m}=(m_0,\dots,m_{h-1})$}
if every vertex at depth $t$ has exactly $m_t$ children.
Its leaves are precisely the vertices at depth $h$, and we take $\dO$ to be the leaf set.
See Figure~\ref{fig:levelreg}.
\end{definition}

\begin{figure}[ht]
\centering
\begin{tikzpicture}\begin{scope}[
    ivert/.style={circle, draw, fill=black, inner sep=1.5pt},
    lvert/.style={circle, draw, fill=white, inner sep=2pt},
    level distance=12mm,
    level 1/.style={sibling distance=20mm},
    level 2/.style={sibling distance=10mm},
    edge from parent/.style={draw,thick}
]
\node at (0,0.7) {$\mathbf{m}=(3,2)$, $h=2$};
\node[ivert, label=left:{\footnotesize $d\!=\!0$}] (r) at (0,0) {}
    child { node[ivert] {}
        child { node[lvert] {} }
        child { node[lvert] {} }
    }
    child { node[ivert] {}
        child { node[lvert] {} }
        child { node[lvert] {} }
    }
    child { node[ivert] {}
        child { node[lvert] {} }
        child { node[lvert] {} }
    };
\node[anchor=west] at (2.1,-1.2) {\footnotesize $d\!=\!1$};
\node[anchor=west] at (2.5,-2.4) {\footnotesize $d\!=\!2$\;(leaves)};\end{scope}
\begin{scope}[
    ivert/.style={circle, draw, fill=black, inner sep=1.5pt},
    lvert/.style={circle, draw, fill=white, inner sep=2pt},
    level distance=12mm,
    level 1/.style={sibling distance=30mm},
    level 2/.style={sibling distance=10mm},
    edge from parent/.style={draw,thick}
]
\node at (9,0.7) {$\mathbf{m}=(2,3)$, $h=2$};
\node[ivert] (r2) at (9,0) {}
    child { node[ivert] {}
        child { node[lvert] {} }
        child { node[lvert] {} }
        child { node[lvert] {} }
    }
    child { node[ivert] {}
        child { node[lvert] {} }
        child { node[lvert] {} }
        child { node[lvert] {} }
    };\end{scope}
\end{tikzpicture}
\caption{Level--regular trees with non-constant branching sequences. Left: $\mathbf{m}=(3,2)$, $6$ leaves. Right: $\mathbf{m}=(2,3)$, $6$ leaves. Both have $6$ leaves but different spectra (see Example~\ref{ex:nonconstant}).}
\label{fig:levelreg}
\end{figure}
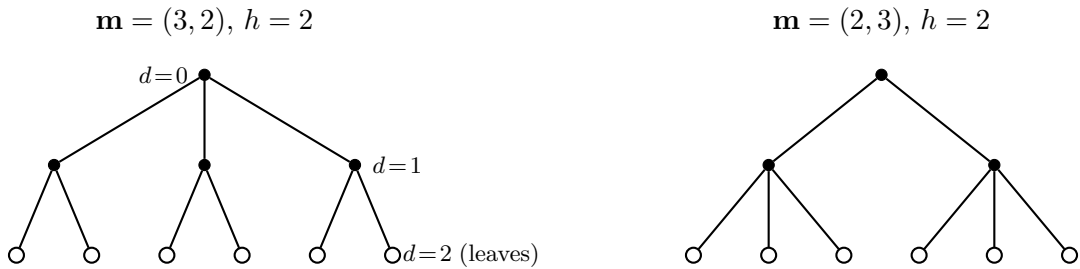

For $T(\mathbf{m})$ define, for $j=1,\dots,h$,
\begin{equation}\label{eq:muj}
\mu_j(\mathbf{m}):=\frac{1}{1+m_{h-1}+m_{h-2}m_{h-1}+\cdots+\prod_{t=h-j+1}^{h-1}m_t},
\end{equation}
with the convention $\mu_1(\mathbf{m})=1$.
Also let $N_d=\prod_{t=0}^{d-1}m_t$ denote the number of vertices at depth $d$, with $N_0=1$.

\begin{theorem}\label{thm:levelspec}
Let $T(\mathbf{m})$ be level--regular of height $h$ with leaf boundary $\dO$.
Then the Steklov eigenvalues are:
\begin{itemize}[leftmargin=2em]
\item $\lambda_1=0$ with multiplicity $1$;
\item for each $j=1,\dots,h$, the eigenvalue $\mu_j(\mathbf{m})$ has multiplicity $(m_{h-j}-1)N_{h-j}$.
\end{itemize}
\end{theorem}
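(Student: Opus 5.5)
The plan is to diagonalize the Dirichlet-to-Neumann map explicitly. I will build an orthogonal decomposition of $\RR^{\dO}$ into subspaces that are attached to interior vertices and invariant under the symmetry of $T(\mathbf{m})$, and then check directly that each subspace consists of Steklov eigenfunctions. Since the eigenvalues count $|\dO|=N_h$ with multiplicity, it suffices to exhibit $N_h$ linearly independent eigenfunctions with the claimed eigenvalues.

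\textbf{The test functions.} Fix $j\in\{1,\dots,h\}$ and a vertex $v$ at depth $d=h-j$, with children $w_1,\dots,w_{m_d}$. For a weight vector $a\in\RR^{m_d}$ with $\sum_i a_i=0$, define $g_{v,a}\in\RR^{\dO}$ as follows: $g_{v,a}$ equals $a_i$ on every leaf below $w_i$, and $0$ on every leaf not below $v$. I claim that its harmonic extension $f$ has the following form.
\begin{itemize}[leftmargin=2em]
\item $f\equiv 0$ outside the subtree rooted at $v$, and $f(v)=0$.
\item Inside the branch of $w_i$, $f=a_i\phi_s$ depends only on the depth $s$, where $\phi_d=0$ and $\phi_h=1$.
\end{itemize}

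\textbf{Verification.} Harmonicity at a vertex at depth $s$ with $d<s<h$ reads $(1+m_s)\phi_s=\phi_{s-1}+m_s\phi_{s+1}$. Writing $\delta_s=\phi_s-\phi_{s-1}$, this is the recursion $\delta_{s+1}=\delta_s/m_s$. Harmonicity at $v$ holds because $\sum_i a_i\phi_{d+1}=0$. Harmonicity at the ancestors of $v$ and at vertices outside the subtree is trivial, since $f$ vanishes there and at all their neighbours. At a leaf below $w_i$, the normal derivative is $a_i\delta_h$. At a leaf outside the subtree it is $0$. Hence $g_{v,a}$ is a Steklov eigenfunction with eigenvalue $\lambda=\delta_h$.

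\textbf{Computing the eigenvalue.} Unrolling the recursion gives $\delta_{h-r}=\delta_h\, m_{h-r}\cdots m_{h-1}$ for $0\le r\le j-1$. Summing the increments from depth $d+1$ to $h$ gives
\[
1=\phi_h-\phi_d=\delta_h\Bigl(1+m_{h-1}+m_{h-2}m_{h-1}+\cdots+\prod_{t=h-j+1}^{h-1}m_t\Bigr).
\]
Therefore $\lambda=\mu_j(\mathbf{m})$. For $j=1$ the children of $v$ are themselves leaves, and one gets $\lambda=1$ directly.

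\textbf{Counting and independence.} For fixed $v$ at depth $h-j$, the admissible weights form a space of dimension $m_{h-j}-1$, and there are $N_{h-j}$ such vertices. This gives multiplicity at least $(m_{h-j}-1)N_{h-j}$ for $\mu_j$. Together with the constant function for $\lambda_1=0$, the total is
\[
1+\sum_{d=0}^{h-1}(m_d-1)N_d=1+\sum_{d=0}^{h-1}(N_{d+1}-N_d)=N_h=|\dO|.
\]
To conclude, I will show that all these eigenfunctions are mutually orthogonal in $\ell^2(\dO)$.
\begin{itemize}[leftmargin=2em]
\item For distinct vertices $v\neq v'$ the leaf supports are either disjoint or nested.
\item In the nested case, with $v'$ strictly below $v$, $g_{v,a}$ is constant on the leaves below $v'$, while $g_{v',a'}$ has zero sum there, so the inner product vanishes.
\item Each $g_{v,a}$ has total sum zero, so it is orthogonal to the constants.
\item Within a single $v$, orthogonality reduces to orthogonality of the weight vectors $a$, up to the common factor $N_h/N_{d+1}$.
\end{itemize}
Hence the vectors span $\RR^{\dO}$, and this is the full spectrum with exactly the stated multiplicities.

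The only step needing real care is the harmonic-extension ansatz. One must make sure that harmonicity holds at $v$ and at its ancestors; the zero-sum condition on $a$ is exactly what makes this work. The rest is bookkeeping.
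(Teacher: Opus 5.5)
Your proposal is correct and follows essentially the same route as the paper: the same zero-sum block functions attached to a vertex at depth $h-j$, the same increment recursion $\delta_s=m_s\delta_{s+1}$ as in Lemma~\ref{lem:flux}, and the same count $1+\sum_{d=0}^{h-1}(m_d-1)N_d=N_h$. The differences are minor: you get $f(v)=0$ by checking that your explicit ansatz is harmonic at $v$, where the paper uses a permutation-symmetry argument, and your orthogonality check (including nested supports) spells out the independence across different $j$ that the paper leaves tacit; in both arguments, reading off the exact multiplicities also uses that the $\mu_j$ are positive and pairwise distinct, which holds because their denominators strictly increase in $j$.
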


\begin{corollary}\label{cor:mary}
For the complete $m$--ary tree $T_{m,h}$, where $m_t\equiv m$ and $m\ge 2$, the spectrum is $0$, $\mu_j=(m-1)/(m^j-1)$ for $j=1,\dots,h$, where $\mu_j$ has multiplicity $(m-1)m^{h-j}$.
In particular,
\begin{equation}\label{eq:mary}
\lambda_2(T_{m,h},\dO)=\frac{m-1}{m^h-1}.
\end{equation}
\end{corollary}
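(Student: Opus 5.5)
This is a specialization of Theorem~\ref{thm:levelspec} to $m_t\equiv m$, so the plan is to substitute and simplify, then identify which eigenvalue is $\lambda_2$.

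First, the eigenvalues. With all $m_t=m$, every product $\prod_{t=h-i+1}^{h-1}m_t$ equals $m^{i-1}$. The denominator in \eqref{eq:muj} is therefore the geometric sum
\[
1+m+\cdots+m^{j-1}=\frac{m^j-1}{m-1},
\]
which is valid since $m\ge 2$. This gives $\mu_j=(m-1)/(m^j-1)$, and it is consistent with the convention $\mu_1=1$.

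Second, the multiplicities. Here $N_{h-j}=m^{h-j}$, so Theorem~\ref{thm:levelspec} gives multiplicity $(m-1)m^{h-j}$. As a consistency check, I would confirm the count is complete:
\[
1+\sum_{j=1}^h (m-1)m^{h-j}=1+(m^h-1)=m^h=\ell .
\]
So the list $0,\mu_1,\dots,\mu_h$ with these multiplicities accounts for every Steklov eigenvalue.

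Third, $\lambda_2$. Since $m\ge 2$, the quantity $m^j-1$ is strictly increasing in $j$, so $\mu_j$ is strictly decreasing in $j$. Every $\mu_j$ is positive and occurs with multiplicity at least $1$. Hence the smallest nonzero eigenvalue is $\mu_h$, which gives \eqref{eq:mary}. One point to note: $\lambda_1=0$ has multiplicity exactly $1$ by Theorem~\ref{thm:levelspec}, because the tree is connected.

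I expect no real obstacle. The only care needed is in indexing the product: it has $j-1$ factors, which makes the denominator sum run up to $m^{j-1}$.
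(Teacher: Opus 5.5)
Your proposal is correct and matches the paper's proof, which likewise just specializes Theorem~\ref{thm:levelspec} by evaluating the denominator as the geometric sum $1+m+\cdots+m^{j-1}=(m^j-1)/(m-1)$. You also spell out two points the paper leaves implicit: the multiplicity $(m-1)m^{h-j}$, and the fact that $\mu_j$ decreases in $j$, which identifies $\lambda_2=\mu_h$.
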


Corollary~\ref{cor:mary} shows that for the complete $(D-1)$--ary tree, $\lambda_2=(D-2)/((D-1)^h-1)\sim (D-2)/\ell$.
Comparing with Theorem~\ref{thm:lambda2sharp}, the ratio $\lambda_2/(D/\ell)\to 1$ as $D\to\infty$, so the bound is asymptotically tight at the natural scale $D/\ell$.

Finally, we give an upper bound for higher eigenvalues.

\begin{theorem}\label{thm:lambdak}
Let $T$ be a tree with leaf boundary $\dO$, $|\dO|=\ell$, and maximum degree at most $D$.
Then for every $k=1,\dots,\ell$,
\begin{equation}\label{eq:lambdak}
\lambda_k(T,\dO)\le \min\!\left\{1,\;\frac{16Dk}{\ell}\right\}.
\end{equation}
\end{theorem}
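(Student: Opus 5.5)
The statement splits into two independent bounds, $\lambda_k\le 1$ and $\lambda_k\le 16Dk/\ell$, and I will prove each through the min--max characterization \eqref{eq:minmax}.

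\emph{The bound $\lambda_k\le 1$.} It suffices to prove $\lambda_\ell\le 1$. The DtN operator on $\RR^{\dO}$ is the Schur complement (Kron reduction) of the graph Laplacian onto the leaves. Equivalently, $\mathcal{E}_T(\widehat g)\le \mathcal{E}_T(\tilde g)$ for every extension $\tilde g$ of $g$, since the harmonic extension minimizes energy. Given any $g\in\RR^{\dO}$, take $\tilde g(x)=g(x)$ on leaves and $\tilde g\equiv 0$ on $\Om$. Each leaf $x$ has exactly one edge, and it goes to an interior vertex, unless $T$ is a single edge with two leaves, which I check by hand: there $\lambda_2=2$, so that case must be excluded or handled separately. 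The energy of $\tilde g$ is then $\sum_{x\in\dO} g(x)^2$, so $\mathcal{R}(g)\le 1$ for every $g$, and taking $W=\RR^{\dO}$ gives $\lambda_k\le\lambda_\ell\le 1$.

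\emph{The bound $\lambda_k\le 16Dk/\ell$.} I may assume $16Dk<\ell$, since otherwise the first bound already suffices. The plan is to partition the leaves into $k$ disjoint groups supported on nearly disjoint subtrees and to use indicator-type test functions. The first step is a tree-partition lemma: for any integer $s\ge 1$ with $sD$ not too large relative to $\ell$, a tree of maximum degree $\le D$ contains $k$ edge-disjoint rooted subtrees $T_1,\dots,T_k$, each of the form ``a vertex $v_i$ together with some of its child branches'', each containing between $s$ and $Ds$ leaves. I will take $s\approx \ell/(2Dk)$. To build them I root $T$ and repeatedly pick a deepest vertex $v$ whose descendant leaf count is at least $s$. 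Each of its child branches has fewer than $s$ leaves, so greedily bundling child branches of $v$ gives groups with leaf counts in $[s,2s)$ (plus a remainder with fewer than $s$ leaves). I cut these groups off and iterate. Each extraction discards fewer than $s$ leaves as remainder, so with a leaf budget of roughly $\ell\ge 2ks$ I obtain $k$ groups.

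The second step is the test functions. For group $i$, with leaf set $B_i$ and root $v_i$, set $\tilde f_i=1$ on all vertices of the bundled branches and $0$ elsewhere, with the branching vertex $v_i$ itself set to $0$. The energy of $\tilde f_i$ is then at most the number of bundled branch edges at $v_i$, which is at most $D$, while $\sum_{x\in\dO} f_i^2=|B_i|\ge s$. The supports are vertex-disjoint, and cross terms in the energy vanish because distinct groups hang off through distinct edges and $v_i$ is set to $0$. Hence for $g=\sum c_i f_i$ restricted to the leaves, using the extension $\sum c_i\tilde f_i$,
\[
\mathcal{E}(\widehat g)\le \sum_i c_i^2 D,\qquad \sum_{x\in\dO} g(x)^2=\sum_i c_i^2|B_i|\ge s\sum_i c_i^2 .
\]
Therefore $\lambda_k\le D/s\approx 2D^2k/\ell$.

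This is off by a factor of $D$, so the partition must be refined to make each group hang off by $O(1)$ edges rather than $D$. The idea is to let each bundle consist of branches that are all cut at a single vertex, or else to split a bundle of $r$ branches into smaller ones. I will use a centroid-type recursion. If a deepest heavy vertex $v$ has a single child branch with at least $s/D$ leaves, I cut that one edge, which has energy $1$ and at least $s/D$ leaves. Otherwise all child branches are small and I handle them inside the bundle, where setting $v_i=1$ and cutting the single parent edge costs energy $1$. In both cases each group costs energy $\le 1$ and carries at least $s/D$ leaves, giving $\lambda_k\le D/s$ with $s$ now of order $\ell/(8k)$ after losses. This yields the constant $16Dk/\ell$. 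Making this bookkeeping rigorous, so that the discarded leaves sum to at most $\ell/2$ while each group has cost $1$ and $\ge \ell/(16Dk)$ leaves, is the main obstacle.
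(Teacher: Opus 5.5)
Your proof of $\lambda_k\le 1$ is correct. The zero extension on $\Om$ has energy exactly $\sum_{\dO}g^2$, which is a clean alternative to the Schur-complement argument of Corollary~\ref{cor:leq1}. You are also right that $K_2$ must be excluded, since there $\lambda_2=2$.

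The bound $\lambda_k\le 16Dk/\ell$, however, is not established. Your first construction gives only about $2D^2k/\ell$. The refinement meant to remove the extra factor $D$ is explicitly left as ``the main obstacle'', and that is where the whole content of the theorem lies. Worse, the refinement as sketched cannot work, for a structural reason. Pieces are cut iteratively from the \emph{current} tree, so a piece cut later (hanging by one edge in the current tree) may have many previously cut pieces attached to it in the original $T$. A caterpillar shows this: cut the pendant branches first and the spine branch last. In $T$ that piece's indicator then has energy $1+(\text{number of attached earlier pieces})$. Each attaching edge also contributes a cross term $(c_i-c_j)^2$ to $\mathcal{E}_T\bigl(\sum_i c_i\tilde f_i\bigr)$. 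So neither ``cost $\le 1$ per group'' nor ``cross terms vanish'' holds. The same problem already undermines the energy count in your first construction: $v_i$ (set to $0$ for group $i$), or the parent of a discarded subtree, can later be absorbed into another group's bundle. Your case split is also vacuous: a heavy vertex with at most $D$ children always has a child branch with at least $s/D$ leaves.

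What is missing is a \emph{selection} step after partitioning, instead of trying to make every extracted piece cheap. The paper partitions all of $T$ into $M=4k$ connected pieces, each containing at least $t=\lfloor \ell/(4Dk)\rfloor$ leaves. It does this by deepest-heavy greedy extraction; each cut piece has fewer than $Dt$ leaves, so the final remainder still has at least $t$. Contracting the pieces gives a quotient tree $Q$ on $M$ vertices. Since $\sum\deg_Q=2M-2$, more than $M/2$ of its vertices have degree at most $2$. These induce a linear forest, so $k$ of them can be chosen pairwise non-adjacent in $Q$. For these $k$ pieces the indicator test functions have no cross terms. Each costs at most $2$, because two disjoint connected subsets of a tree are joined by at most one edge. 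Each carries at least $t$ leaves, giving $\lambda_k\le 2/t\le 16Dk/\ell$. Your single-edge extraction could be completed in exactly this way. Without the quotient-tree degree count and the independent-set selection, the argument does not close.
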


The paper is organized as follows.
In Section~\ref{sec:proofs}, we provide the proofs.
In Section~\ref{sec:numerics}, we present numerical experiments.
In Section~\ref{sec:conclude}, we conclude with remarks and open problems.

\section{Proofs}\label{sec:proofs}

\subsection{Preliminaries}\label{subsec:prelim}
Throughout the general graph discussion, we assume that no two boundary vertices are adjacent.

For $f\in\RR^V$, the Dirichlet energy is
\[
\mathcal{E}_G(f):=\sum_{(x,y)\in E}\bigl(f(x)-f(y)\bigr)^2.
\]
If $g\in\RR^{\dO}$ is boundary data, its harmonic extension $\widehat{g}$ is the unique function on $V$ with $\widehat{g}|_{\dO}=g$ and $\Delta\widehat{g}=0$ on $\Om$.

\begin{lemma}[Harmonic extension minimizes energy]\label{lem:harmmin}
Fix $g\in\RR^{\dO}$.
Among all $f\in\RR^V$ with $f|_{\dO}=g$, the energy $\mathcal{E}_G(f)$ is minimized by the harmonic extension $\widehat{g}$.
\end{lemma}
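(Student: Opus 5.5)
The approach is to treat $\mathcal{E}_G$ as a quadratic functional on the affine space $\mathcal{A}_g=\{f\in\RR^V: f|_{\dO}=g\}$ and to show that its unique critical point is both the harmonic extension and the global minimizer. The argument has three steps.

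\emph{Step 1 (existence and uniqueness of $\widehat{g}$).} Write $f=g\oplus u$ with $u\in\RR^{\Om}$. The equations $\Delta f(x)=0$ for $x\in\Om$ form a linear system $L_{\Om\Om}u=b(g)$, where $L_{\Om\Om}$ is the principal submatrix of the graph Laplacian indexed by $\Om$. Suppose $L_{\Om\Om}u=0$ with zero boundary data. Then
\[
0=\langle u,L_{\Om\Om}u\rangle=\mathcal{E}_G(0\oplus u),
\]
so $u$ is constant on each connected component of $G$. I will assume, as is implicit in the setting, that $G$ is connected and $\dO\neq\emptyset$. Then every component meets $\dO$, where the value is $0$, so $u\equiv0$. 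Hence $L_{\Om\Om}$ is positive definite, in particular invertible, and $\widehat{g}$ exists and is unique.

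\emph{Step 2 (orthogonality identity).} Any competitor can be written $f=\widehat{g}+\varphi$ with $\varphi|_{\dO}=0$. Expanding the energy gives
\[
\mathcal{E}_G(f)=\mathcal{E}_G(\widehat{g})+2\sum_{(x,y)\in E}(\widehat{g}(x)-\widehat{g}(y))(\varphi(x)-\varphi(y))+\mathcal{E}_G(\varphi).
\]
I will apply discrete Green's formula to the cross term. Summing over edges and regrouping by vertex,
\[
\sum_{(x,y)\in E}(\widehat{g}(x)-\widehat{g}(y))(\varphi(x)-\varphi(y))=\sum_{x\in V}\varphi(x)\,\Delta\widehat{g}(x).
\]
This regrouping is the only place where some care is needed: each unordered edge must be counted once in $\mathcal{E}_G$ and twice in the vertex sum. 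The right-hand side vanishes, because $\varphi=0$ on $\dO$ and $\Delta\widehat{g}=0$ on $\Om$.

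\emph{Step 3 (conclusion).} By Steps 1 and 2,
\[
\mathcal{E}_G(f)=\mathcal{E}_G(\widehat{g})+\mathcal{E}_G(\varphi)\ge\mathcal{E}_G(\widehat{g}).
\]
If equality holds, then $\mathcal{E}_G(\varphi)=0$. This forces $\varphi$ to be constant on each component, and since $\varphi$ vanishes on $\dO$, we get $\varphi\equiv0$. So $\widehat{g}$ is the minimizer, and it is the unique one.

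No step is a genuine obstacle. The only points needing attention are the edge-counting convention in the Green identity and the connectivity assumption used for uniqueness.
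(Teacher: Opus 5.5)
Your proposal is correct and follows the paper's own proof: write $f=\widehat{g}+\varphi$ with $\varphi|_{\dO}=0$, expand the energy, and kill the cross term with the discrete Green identity, using $\Delta\widehat{g}=0$ on $\Om$ and $\varphi=0$ on $\dO$. Your Step~1 (invertibility of $L_{\Om\Om}$, so that $\widehat{g}$ exists) and your uniqueness of the minimizer are extra details that the paper does not prove in this lemma; it builds existence into the definition of $\widehat{g}$ and states invertibility in Proposition~\ref{prop:schur}.
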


\begin{proof}
Write $f=\widehat{g}+h$ with $h|_{\dO}=0$, expand the energy, and use $\Delta\widehat{g}=0$ on $\Om$.
\end{proof}

The Steklov Rayleigh quotient of nonzero boundary data $g\in\RR^{\dO}$ is
\begin{equation}\label{eq:rayleigh}
\mathcal{R}_{G,\dO}(g):=\frac{\mathcal{E}_G(\widehat{g})}{\sum_{x\in\dO}g(x)^2}.
\end{equation}
The min--max principle then gives, for $k=1,\dots,|\dO|$,
\begin{equation}\label{eq:minmax_formal}
\lambda_k(G,\dO)=\min_{\substack{W\subseteq\RR^{\dO}\\\dim W=k}}\ \max_{0\neq g\in W}\ \mathcal{R}_{G,\dO}(g).
\end{equation}
In several proofs below it is convenient to construct vertex functions $f\in\RR^V$ with prescribed boundary values $g=f|_{\dO}$;
by Lemma~\ref{lem:harmmin}, $\mathcal{R}_{G,\dO}(g)\le \mathcal{E}_G(f)/\sum_{\dO}g^2$ for any such extension $f$, so any extension provides a valid upper bound for the Rayleigh quotient.

Let $L$ be the Laplacian matrix of $G$ ordered so that boundary vertices come first:
\[
L=\begin{pmatrix}
L_{\dO\dO} & L_{\dO\Om}\\[2pt]
L_{\Om\dO} & L_{\Om\Om}
\end{pmatrix}.
\]

\begin{proposition}[Dirichlet--to--Neumann operator]\label{prop:schur}
Assume $G$ is connected and $\dO\neq\varnothing$.
Then $L_{\Om\Om}$ is invertible and the Dirichlet--to--Neumann operator $\Lambda:\RR^{\dO}\to\RR^{\dO}$ is the symmetric matrix
\begin{equation}\label{eq:schur}
\Lambda=L_{\dO\dO}-L_{\dO\Om}L_{\Om\Om}^{-1}L_{\Om\dO}.
\end{equation}
The Steklov eigenvalues of $(G,\dO)$ coincide with the eigenvalues of $\Lambda$.
\end{proposition}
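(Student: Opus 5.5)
We prove Proposition~\ref{prop:schur} in three steps: invertibility of $L_{\Om\Om}$, the identification of the Dirichlet--to--Neumann map with the Schur complement, and symmetry together with the eigenvalue correspondence.

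\emph{Step 1: invertibility.} $L_{\Om\Om}$ is symmetric and, writing $\mathbf 1$ for the all-ones vector, it has nonpositive off-diagonal entries. We show it is positive definite. For $h\in\RR^{\Om}$, extend $h$ by zero on $\dO$ to a vector $\tilde h\in\RR^V$. Then
\[
h^{\top}L_{\Om\Om}h=\tilde h^{\top}L\tilde h=\mathcal{E}_G(\tilde h)\ge 0 .
\]
Suppose equality holds. Then $\tilde h$ is constant on every edge. Since $G$ is connected, $\tilde h$ is constant on $V$, and since $\tilde h$ vanishes on $\dO\neq\varnothing$, the constant is $0$, so $h=0$. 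Hence $L_{\Om\Om}$ is positive definite, and in particular invertible.

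\emph{Step 2: harmonic extension and the DtN map.} Fix $g\in\RR^{\dO}$ and write $f=(g,u)$ with $u\in\RR^{\Om}$. The rows of $Lf$ indexed by $\Om$ are exactly $(\Delta f)(x)$ for $x\in\Om$. So harmonicity on $\Om$ reads
\[
L_{\Om\dO}g+L_{\Om\Om}u=0 .
\]
By Step 1 this has the unique solution $u=-L_{\Om\Om}^{-1}L_{\Om\dO}g$, which gives existence and uniqueness of $\widehat g$.

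The rows of $L\widehat g$ indexed by $\dO$ are $\frac{\partial \widehat g}{\partial n}(x)$, by the definition~\eqref{eq:normal}, which is the same sum as~\eqref{eq:lap}. Therefore
\[
\Lambda g=L_{\dO\dO}g+L_{\dO\Om}u=\bigl(L_{\dO\dO}-L_{\dO\Om}L_{\Om\Om}^{-1}L_{\Om\dO}\bigr)g ,
\]
which is~\eqref{eq:schur}.

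\emph{Step 3: symmetry and eigenvalues.} Symmetry of $\Lambda$ follows from $L^{\top}=L$: this gives $L_{\dO\Om}^{\top}=L_{\Om\dO}$, and $L_{\Om\Om}^{-1}$ is symmetric. Now suppose $(f,\lambda)$ solves~\eqref{eq:steklov}. Then $f$ is harmonic on $\Om$, so $f=\widehat g$ with $g=f|_{\dO}$, and the boundary equation becomes $\Lambda g=\lambda g$. Conversely, if $\Lambda g=\lambda g$, then $\widehat g$ solves~\eqref{eq:steklov} with the same $\lambda$. The map $g\mapsto\widehat g$ is linear and injective, so eigenspaces correspond with their dimensions preserved. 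Hence the Steklov eigenvalues of $(G,\dO)$, counted with multiplicity, are exactly the eigenvalues of $\Lambda$.

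The only step that needs any care is the positive definiteness in Step 1, and it reduces to the connectivity argument given there.
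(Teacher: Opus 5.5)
Your proof is correct and takes the same route as the paper, which simply cites the standard Schur complement (Kron reduction) formula without further argument; you have written that derivation out in full: positive definiteness of $L_{\Om\Om}$ via connectivity, block elimination giving $\widehat g$, and the bijection $g\mapsto\widehat g$ between eigenspaces (with symmetry of $\Lambda$ ensuring geometric and algebraic multiplicities agree). The opening remarks about nonpositive off-diagonal entries and the all-ones vector $\mathbf 1$ are never used and can be dropped.
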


\begin{proof}
This is the standard Schur complement formula (also known as Kron reduction).
\end{proof}

\begin{corollary}\label{cor:leq1}
Let $T$ be a tree with leaves as boundary $\dO$.
Then $\lambda_k(T,\dO)\le 1$ for every $k$.
\end{corollary}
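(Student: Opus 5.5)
Because the statement concerns only the largest eigenvalue, it is enough to prove $\lambda_{|\dO|}(T,\dO)\le 1$; every other $\lambda_k$ is bounded by it. By the min--max principle~\eqref{eq:minmax_formal} with $k=|\dO|$ and $W=\RR^{\dO}$, we have $\lambda_{|\dO|}=\max_{g\neq 0}\mathcal{R}_{T,\dO}(g)$. So the whole task reduces to showing
\[
\mathcal{E}_T(\widehat{g})\le\sum_{x\in\dO}g(x)^2 \quad\text{for all } g\in\RR^{\dO}.
\]

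To get this, I will not compute $\widehat g$. Instead I will use the freedom given by Lemma~\ref{lem:harmmin}: any extension of $g$ gives an upper bound on the energy.

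Assume first that $T$ has at least one interior vertex, that is, $T$ is not the single edge $K_2$. Each leaf $x\in\dO$ then has a unique neighbour $p(x)$, and $p(x)\in\Om$, since two adjacent leaves would form a component isomorphic to $K_2$. Define $f\in\RR^V$ by
\[
f|_{\dO}=g,\qquad f|_{\Om}\equiv 0.
\]
The edges of $T$ come in two kinds. An edge between two interior vertices contributes $0$ to $\mathcal{E}_T(f)$. A pendant edge $x\,p(x)$ contributes $(g(x)-0)^2=g(x)^2$. Each leaf lies on exactly one pendant edge, so
\[
\mathcal{E}_T(f)=\sum_{x\in\dO}g(x)^2 .
\]
Lemma~\ref{lem:harmmin} now gives $\mathcal{E}_T(\widehat g)\le \mathcal{E}_T(f)$, hence $\mathcal{R}_{T,\dO}(g)\le 1$ for every nonzero $g$. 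Therefore $\lambda_k\le 1$ for all $k$.

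It remains to handle $T=K_2$. Here there are no interior vertices and $\Lambda$ is simply the Laplacian of the single edge, $\begin{pmatrix}1&-1\\-1&1\end{pmatrix}$, with eigenvalues $0$ and $2$. So this case is a genuine exception to the inequality. It is, however, excluded by the standing assumption of this section that no two boundary vertices are adjacent, and I will note that explicitly in the proof.

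I do not expect a real obstacle. The only point needing care is that the pendant edges are pairwise distinct and that the interior endpoint $p(x)$ can always be assigned the value $0$. Both facts hold precisely because each leaf has degree one and adjacent leaves are excluded.
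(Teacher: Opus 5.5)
Your proof is correct and is essentially the paper's argument in variational form: the paper writes $\Lambda=I-L_{\dO\Om}L_{\Om\Om}^{-1}L_{\Om\dO}\preceq I$ via the Schur complement, and your zero extension on $\Om$ is exactly the inequality $g^{\top}\Lambda g\le g^{\top}L_{\dO\dO}\,g=\sum_{x\in\dO}g(x)^2$ that this Loewner bound encodes. Your explicit treatment of $K_2$, where $\lambda_2=2$, is a worthwhile addition: the paper asserts that leaves of a tree are never adjacent, which fails for $K_2$, so the corollary genuinely depends on the standing non-adjacency assumption.
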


\begin{proof}
For a tree with leaf boundary, no two boundary vertices are adjacent and each has degree~$1$.
Hence $L_{\dO\dO}=I$, and $\Lambda=I-L_{\dO\Om}L_{\Om\Om}^{-1}L_{\Om\dO}$, where the subtracted term is positive semidefinite.
Thus $0\preceq\Lambda\preceq I$.
\end{proof}

\subsection{Edge stretching}\label{subsec:stretch}

\begin{lemma}\label{lem:path}
Let $a,b\in\RR$ and $L\ge 1$.
Among all sequences $(t_0,\dots,t_L)$ with $t_0=a$ and $t_L=b$,
\[
\sum_{i=0}^{L-1}(t_{i+1}-t_i)^2
\]
is minimized by linear interpolation $t_i=a+\frac{i}{L}(b-a)$, and the minimum equals $(a-b)^2/L$.
\end{lemma}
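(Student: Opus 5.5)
The minimization can be handled by rewriting the energy in terms of the increments $d_i:=t_{i+1}-t_i$ for $i=0,\dots,L-1$. The endpoint constraints $t_0=a$ and $t_L=b$ are then equivalent to the single linear constraint $\sum_{i=0}^{L-1}d_i=b-a$. In these variables the functional becomes $\sum_i d_i^2$, so the problem reduces to minimizing a sum of squares subject to a fixed sum.

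The key step is the Cauchy--Schwarz inequality applied to the vectors $(d_0,\dots,d_{L-1})$ and $(1,\dots,1)$:
\[
(b-a)^2=\Bigl(\sum_{i=0}^{L-1}d_i\Bigr)^2\le L\sum_{i=0}^{L-1}d_i^2 .
\]
This gives the lower bound $\sum_i d_i^2\ge (a-b)^2/L$ for every admissible sequence. Equality in Cauchy--Schwarz holds exactly when $(d_i)$ is proportional to the all-ones vector, that is, when all $d_i$ equal $(b-a)/L$. Summing the increments, this means $t_i=a+\frac{i}{L}(b-a)$, which is linear interpolation. Substituting back shows the minimum value is $L\cdot\bigl((b-a)/L\bigr)^2=(a-b)^2/L$, as claimed.

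The only point needing care is the uniqueness and attainment claim. Attainment is immediate, since the interpolating sequence is admissible. Uniqueness follows from the equality case of Cauchy--Schwarz. Alternatively, one can argue via strict convexity of the quadratic functional on the affine set of admissible sequences, noting that the interpolant satisfies the discrete harmonicity condition $t_{i+1}-2t_i+t_{i-1}=0$ at the interior indices. The degenerate case $L=1$ is trivial: there are no free variables and the value is $(a-b)^2$. No genuine obstacle is expected.
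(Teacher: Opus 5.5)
Your proposal is correct and follows the same approach as the paper's proof: pass to the increments $d_i=t_{i+1}-t_i$ with $\sum_i d_i=b-a$, apply Cauchy--Schwarz against the all-ones vector, and use the equality case (all $d_i$ equal) to identify the linear interpolant as the minimizer.
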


\begin{proof}
Let $d_i=t_{i+1}-t_i$, so $\sum_i d_i=b-a$.
By Cauchy--Schwarz, $\sum_i d_i^2\ge \frac{1}{L}\bigl(\sum_i d_i\bigr)^2=(a-b)^2/L$, with equality if and only if all $d_i$ are equal.
\end{proof}

\begin{proof}[Proof of Theorem~\ref{thm:stretch}]
Fix any $k$--dimensional subspace $W\subseteq\RR^{\dO}$.
For $g\in W$, let $\widehat{g}$ be the harmonic extension on $G$.
Define an extension $\widetilde{g}$ on $G^{(L)}$ by:
(i) $\widetilde{g}=\widehat{g}$ on the original vertices, and
(ii) on the stretched path, interpolate linearly between $\widehat{g}(u)$ and $\widehat{g}(v)$.
All energy terms coincide except on the stretched edge, where Lemma~\ref{lem:path} gives an energy factor $1/L$.
Thus $\mathcal{E}_{G^{(L)}}(\widetilde{g})\le \mathcal{E}_G(\widehat{g})$.
By Lemma~\ref{lem:harmmin}, the harmonic extension on $G^{(L)}$ has energy at most $\mathcal{E}_{G^{(L)}}(\widetilde{g})$,
so $\mathcal{R}_{G^{(L)},\dO}(g)\le \mathcal{R}_{G,\dO}(g)$ for all $g\in W$.
Taking $\max_{0\neq g\in W}$ and then $\min_{\dim W=k}$ in~\eqref{eq:minmax_formal} yields the theorem.
\end{proof}

\begin{remark}[Comparison with existing monotonicity results]\label{rem:mono_compare}
He--Hua~\cite{HeHua2022Flows} proved that $\lambda_2$ is monotone non-decreasing under taking subtrees (with leaf boundary),
and Yu--Yu~\cite{YuYu2024} extended this to all $\lambda_k$ in the more general setting of graphs with combs.
Both results concern \emph{edge deletion}: removing an edge from a graph decreases the Rayleigh quotient.
Edge stretching is a different operation---it does not remove the edge but \emph{subdivides} it by inserting new interior vertices.
Theorem~\ref{thm:stretch} is complementary to these results: subdivision increases effective resistance along the edge (from $1$ to $L$ in the resistor--network picture), decreasing the Steklov eigenvalues without altering the boundary set or disconnecting the graph.
\end{remark}

\begin{proposition}[Strict decrease for simple eigenvalues]\label{prop:strict}
Suppose $\lambda_k(G,\dO)>\lambda_{k-1}(G,\dO)$.
Let $\phi_k$ be a Steklov eigenfunction with eigenvalue $\lambda_k$, normalized by $\|\phi_k\|_{\ell^2(\dO)}=1$.
If the stretched edge $e=(u,v)$ satisfies $\phi_k(u)\neq\phi_k(v)$, then for every $L\ge 2$,
\[
\lambda_k(G^{(L)},\dO)<\lambda_k(G,\dO).
\]
\end{proposition}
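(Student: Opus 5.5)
The plan is to run the min--max argument from the proof of Theorem~\ref{thm:stretch} again, but with one specific test space. On that space we will make every inequality in the chain quantitative and use compactness to get a strict gap.

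First, by Proposition~\ref{prop:schur}, $\Lambda$ is symmetric. So choose an orthonormal eigenbasis $\phi_1,\dots,\phi_{|\dO|}$ of $\Lambda$ in $\ell^2(\dO)$ with eigenvalues $\lambda_1\le\cdots\le\lambda_{|\dO|}$, where $\phi_k$ is the given eigenfunction restricted to $\dO$. This is possible because any unit vector in the $\lambda_k$-eigenspace can be completed to an orthonormal eigenbasis. The harmonic extension of $\phi_k|_{\dO}$ is $\phi_k$ itself. Set $W=\mathrm{span}\{\phi_1,\dots,\phi_k\}$, so $\dim W=k$. For $g=\sum_{i\le k}c_i\phi_i$ one has $\mathcal{R}_{G,\dO}(g)=\sum_i\lambda_i c_i^2/\sum_i c_i^2\le\lambda_k$. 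Equality holds iff $c_i=0$ for every $i$ with $\lambda_i<\lambda_k$. Since $\lambda_{k-1}<\lambda_k$, this means $c_1=\dots=c_{k-1}=0$, i.e.\ $g$ is a multiple of $\phi_k$.

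Next, for $g\in W$, the proof of Theorem~\ref{thm:stretch} gives the chain
\[
\mathcal{R}_{G^{(L)},\dO}(g)\le\frac{\mathcal{E}_{G^{(L)}}(\widetilde g)}{\|g\|^2}
=\mathcal{R}_{G,\dO}(g)-\Bigl(1-\tfrac1L\Bigr)\frac{(\widehat g(u)-\widehat g(v))^2}{\|g\|^2}\le\lambda_k .
\]
The function $g\mapsto\mathcal{R}_{G^{(L)},\dO}(g)$ is continuous on the compact unit sphere of $W$, because harmonic extension is linear. Hence its maximum over that sphere is attained at some unit vector $g^\ast$. Suppose, for contradiction, that this maximum equals $\lambda_k$. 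Then every inequality in the chain is an equality. In particular $\mathcal{R}_{G,\dO}(g^\ast)=\lambda_k$, so $g^\ast=\pm\phi_k|_{\dO}$ by the first step, and therefore $\widehat{g^\ast}=\pm\phi_k$. Equality also forces $(1-1/L)(\phi_k(u)-\phi_k(v))^2=0$. But $L\ge2$ and $\phi_k(u)\ne\phi_k(v)$, so this is impossible.

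Therefore $\max_{0\ne g\in W}\mathcal{R}_{G^{(L)},\dO}(g)<\lambda_k$, and the min--max formula \eqref{eq:minmax_formal} gives $\lambda_k(G^{(L)},\dO)<\lambda_k(G,\dO)$.

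The only delicate point is the equality characterization in the first step. It is where the hypothesis $\lambda_k>\lambda_{k-1}$ enters: it ensures $W$ meets the top eigenvalue level only along $\phi_k$. This remains true even if $\lambda_{k+1}=\lambda_k$, because $W$ contains no other $\lambda_k$-eigenvectors. The edge hypothesis is then used exactly once, to rule out the single candidate maximizer $\pm\phi_k$.
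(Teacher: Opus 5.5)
Your proposal is correct and follows the paper's proof. It uses the same test space $\mathrm{span}\{\phi_1,\dots,\phi_k\}$, the same equality characterization coming from $\lambda_{k-1}<\lambda_k$, and the same energy drop $(1-1/L)(\phi_k(u)-\phi_k(v))^2>0$ on the stretched edge to exclude the only candidate maximizer $\pm\phi_k$; you simply make explicit the compactness argument (that the maximum is attained) and the exact energy identity, which the paper leaves implicit in its Loewner-order phrasing $\Lambda^{(L)}\preceq\Lambda$.
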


\begin{proof}
The proof of Theorem~\ref{thm:stretch} shows $\Lambda^{(L)}\preceq\Lambda$ in Loewner order.
Let $e_1,\dots,e_{|\dO|}$ be an orthonormal eigenbasis of $\Lambda$ with eigenvalues $\lambda_1\le\cdots\le\lambda_{|\dO|}$ and $e_k=\phi_k|_{\dO}$.
Since $\lambda_k>\lambda_{k-1}$, the vector $e_k$ is the unique (up to sign) unit vector maximizing the Rayleigh quotient on $F:=\mathrm{span}\{e_1,\dots,e_k\}$.
By Lemma~\ref{lem:path}, $e_k^\top\Lambda^{(L)}e_k < e_k^\top\Lambda e_k = \lambda_k$.
For any $g\in F$ with $g^\top\Lambda^{(L)}g=\lambda_k\|g\|^2$, we would need $g^\top\Lambda g=\lambda_k\|g\|^2$, forcing $g$ to be a multiple of $e_k$, which is impossible.
Hence $\max_{0\neq g\in F}g^\top\Lambda^{(L)}g/\|g\|^2<\lambda_k$, and the min--max principle concludes the proof.
\end{proof}

\begin{proof}[Proof of Corollary~\ref{cor:deg2}]
$T$ is obtained from $T_{\mathrm{ctr}}$ by a $2$--stretch of one edge. Apply Theorem~\ref{thm:stretch}.
\end{proof}

\subsection{Explicit spectra for level--regular trees}\label{subsec:levelspec}

We first need a flux computation on a symmetric branch.

\begin{lemma}\label{lem:flux}
Fix $j\ge 1$ and integers $b_1,\dots,b_{j-1}\ge 1$.
Consider a rooted level--regular tree of height $j-1$ in which vertices at distance $t$ from the root have exactly $b_{t+1}$ children ($t=0,\dots,j-2$), with leaves forming the boundary.
Attach an extra vertex $p$ to the root.
Prescribe $g\equiv 1$ on all leaves and $g(p)=0$.
Let $f$ be the harmonic extension of $g$.
Then for any boundary leaf $x$,
\begin{equation}\label{eq:rho}
\frac{\partial f}{\partial n}(x)
=\rho(b_1,\dots,b_{j-1})
:=\frac{1}{1+b_{j-1}+b_{j-2}b_{j-1}+\cdots+b_1 b_2\cdots b_{j-1}}.
\end{equation}
\end{lemma}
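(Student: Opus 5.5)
We argue by symmetry and then solve a series resistor network. Every leaf-to-root path looks the same, and the boundary data are invariant under the automorphism group of the branch fixing the root and $p$. By uniqueness of the harmonic extension, $f$ is therefore also invariant, so $f$ depends only on the depth. Write $f_t$ for the common value at depth $t$ ($t=0$ the root, $t=j-1$ the leaves), so $f_{j-1}=1$, and set $f(p)=0$. Harmonicity at a depth-$t$ vertex with $1\le t\le j-2$ reads
\[
(f_t-f_{t-1})+b_{t+1}(f_t-f_{t+1})=0 .
\]
At the root it reads $(f_0-0)+b_1(f_0-f_1)=0$. (If $j=1$, the branch is a single leaf adjacent to $p$, and the flux is $1$, matching the convention $\rho()=1$.)

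Next we pass to a conserved flux. Let $c_t:=f_{t}-f_{t-1}$ for $t\ge1$, and $c_0:=f_0$. The harmonic equations say exactly that $c_t=b_{t+1}c_{t+1}$: the current entering a vertex from its parent equals the total current leaving to its children. Iterating gives
\[
c_t=b_{t+1}b_{t+2}\cdots b_{j-1}\,c_{j-1}.
\]
The telescoping identity
\[
1=f_{j-1}-0=\sum_{t=0}^{j-1}c_t=c_{j-1}\bigl(1+b_{j-1}+b_{j-2}b_{j-1}+\cdots+b_1\cdots b_{j-1}\bigr)
\]
then gives $c_{j-1}=\rho(b_1,\dots,b_{j-1})$. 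Finally, a leaf $x$ has a single neighbor, its parent at depth $j-2$ (or $p$ when $j=1$), so
\[
\frac{\partial f}{\partial n}(x)=f_{j-1}-f_{j-2}=c_{j-1}=\rho .
\]

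The only step requiring care is justifying the reduction to depth-only functions and keeping the indices straight (which $b$ counts children at which depth, and the root equation involving $p$). Uniqueness of the harmonic extension, which follows from the invertibility of $L_{\Omega\Omega}$ in Proposition~\ref{prop:schur}, settles the symmetry step cleanly. Alternatively, we can view the network as resistors in series with effective resistance $\sum_t 1/(b_{t+1}\cdots b_{j-1})$ per leaf after scaling.
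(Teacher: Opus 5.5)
Your proof is correct and takes essentially the same route as the paper: a symmetry reduction to level values, harmonicity rewritten as the flux recursion $c_t=b_{t+1}c_{t+1}$, and a telescoping sum from $p$ to the leaves. The differences are cosmetic. You index by depth from the root rather than by distance from $p$, you justify the symmetry step through uniqueness of the harmonic extension, and you treat $j=1$ separately, whereas the paper leaves the last two implicit.
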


\begin{proof}
By symmetry, values depend only on the distance from $p$.
Let $a_t$ be the value at distance $t$, so $a_0=0$ and $a_j=1$.
Harmonicity at level $t$ ($1\le t\le j-1$) yields $a_t-a_{t-1}=b_t(a_{t+1}-a_t)$.
Setting $\delta_t:=a_t-a_{t-1}$ ($1\le t\le j$) gives $\delta_t=b_t\delta_{t+1}$, hence $\delta_t=\delta_j\cdot b_t b_{t+1}\cdots b_{j-1}$.
Since $1=a_j-a_0=\sum_{t=1}^j\delta_t$, we obtain $\delta_j=\rho(b_1,\dots,b_{j-1})$.
Finally, $\frac{\partial f}{\partial n}(x)=a_j-a_{j-1}=\delta_j$.
\end{proof}

\begin{proof}[Proof of Theorem~\ref{thm:levelspec}]
Fix $j$ and let $v$ be any vertex at depth $h-j$.
It has $m_{h-j}$ children; the boundary leaves descending from each child form a block.
Choose coefficients $\alpha=(\alpha_1,\dots,\alpha_{m_{h-j}})$ with $\sum_i\alpha_i=0$.
Define boundary data $g$ that equals $\alpha_i$ on the $i$--th child block of $v$ and equals $0$ on all other leaves.
Let $f$ be the harmonic extension.

\smallskip\noindent
\emph{Step~1: $f(v)=0$.}
The map $\alpha\mapsto f(v)$ is linear in $\alpha=(\alpha_1,\dots,\alpha_{m_{h-j}})$.
Since the child subtrees of $v$ are isomorphic, permuting them leaves $v$ fixed, so by symmetry $f(v)$ depends on $\alpha$ only through a multiple of $\sum_i\alpha_i$.
Because $\sum_i\alpha_i=0$, we obtain $f(v)=0$.

\smallskip\noindent
\emph{Step~2: Steklov boundary condition.}
On each child subtree under $v$, we are in the setting of Lemma~\ref{lem:flux} with parent value $0$ and leaf value $\alpha_i$.
Hence $\frac{\partial f}{\partial n}(x)=\mu_j(\mathbf{m})\alpha_i=\mu_j(\mathbf{m})f(x)$.

\smallskip\noindent
\emph{Step~3: Multiplicity.}
For fixed $v$, the space $\{\alpha:\sum_i\alpha_i=0\}$ has dimension $m_{h-j}-1$.
Different vertices at depth $h-j$ have disjoint descendant leaf sets, yielding independent eigenfunctions.
The multiplicity is $(m_{h-j}-1)N_{h-j}$.
Summing over $j$:
\[
1+\sum_{j=1}^h(m_{h-j}-1)N_{h-j}
=1+\sum_{d=0}^{h-1}(m_d-1)N_d
=1+(N_h-1)=N_h=|\dO|,
\]
so all eigenvalues are accounted for.
\end{proof}

\begin{proof}[Proof of Corollary~\ref{cor:mary}]
For $m_t\equiv m$, the denominator in~\eqref{eq:muj} is $1+m+\cdots+m^{j-1}=(m^j-1)/(m-1)$.
\end{proof}

\begin{example}[Non-constant branching sequences]\label{ex:nonconstant}
The closed formula~\eqref{eq:muj} applies to arbitrary branching sequences, not just constant ones.
Consider $\mathbf{m}=(3,2)$ with $h=2$ (Figure~\ref{fig:levelreg}, left).
Then $|\dO|=6$, and the eigenvalues are $\mu_1=1$ with multiplicity $(2-1)\cdot 3=3$, and $\mu_2=1/(1+2)=1/3$ with multiplicity $(3-1)\cdot 1=2$, together with $\lambda_1=0$.
For $\mathbf{m}=(2,3)$ with $h=2$ (Figure~\ref{fig:levelreg}, right), also $|\dO|=6$, but the eigenvalues are $\mu_1=1$ with multiplicity $(3-1)\cdot 2=4$ and $\mu_2=1/(1+3)=1/4$ with multiplicity $(2-1)\cdot 1=1$.
Thus $\lambda_2(T(3,2))=1/3>1/4=\lambda_2(T(2,3))$: placing more branching closer to the leaves increases $\lambda_2$.
\end{example}

\subsection{Sharp upper bound for \texorpdfstring{$\lambda_2$}{lambda\_2}}\label{subsec:sharp}

\begin{lemma}\label{lem:cut}
Let $(G,\dO)$ be connected and let $S\subseteq V$.
Write $p:=|\dO\cap S|/|\dO|$ and assume $0<p<1$.
Then
\begin{equation}\label{eq:cut}
\lambda_2(G,\dO)\le \frac{|E(S,V\setminus S)|}{p(1-p)\,|\dO|}.
\end{equation}
\end{lemma}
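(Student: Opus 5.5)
We will use the min--max principle \eqref{eq:minmax_formal} with $k=2$ and a two-valued test function adapted to the cut $(S,V\setminus S)$. Since $\lambda_1=0$ with eigenspace spanned by the constant boundary function $\mathbf{1}$, we have
\[
\lambda_2=\min_{0\neq g\perp \mathbf{1}}\mathcal{R}_{G,\dO}(g).
\]
It therefore suffices to exhibit one boundary function $g$ orthogonal to constants whose Rayleigh quotient is at most the right-hand side of \eqref{eq:cut}. Equivalently, we may take $W=\mathrm{span}\{\mathbf{1},g\}$. The quadratic form $\mathcal{E}(\widehat{\cdot})$ kills $\mathbf{1}$, and $g\perp\mathbf{1}$, so $\max_W\mathcal{R}=\mathcal{R}(g)$.

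Define the vertex function $f=\mathbf{1}_S-p$ on all of $V$, that is, $f=1-p$ on $S$ and $f=-p$ off $S$, and set $g=f|_{\dO}$. First check orthogonality:
\[
\sum_{\dO}g=p\ell(1-p)-(1-p)\ell p=0,
\]
where $\ell=|\dO|$. Next compute the denominator:
\[
\sum_{\dO}g^2=p\ell(1-p)^2+(1-p)\ell p^2=p(1-p)\ell.
\]
This is nonzero because $0<p<1$, which also guarantees $g\neq0$.

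For the numerator, $f$ is an extension of $g$, so Lemma~\ref{lem:harmmin} gives $\mathcal{E}_G(\widehat g)\le\mathcal{E}_G(f)$. Each edge inside $S$ or inside $V\setminus S$ contributes $0$ to $\mathcal{E}_G(f)$. Each cut edge contributes $\bigl((1-p)-(-p)\bigr)^2=1$. Hence $\mathcal{E}_G(f)=|E(S,V\setminus S)|$, and dividing by the denominator gives \eqref{eq:cut}.

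There is no real obstacle here. The only point needing care is justifying that $\lambda_2$ is the minimum of $\mathcal{R}$ over $g\perp\mathbf{1}$. This holds because the Dirichlet-to-Neumann operator $\Lambda$ of Proposition~\ref{prop:schur} is symmetric and positive semidefinite, and $\Lambda\mathbf{1}=0$: the harmonic extension of a constant is that constant, whose energy is zero. Connectivity of $G$ is needed so that harmonic extensions are well defined via Proposition~\ref{prop:schur}. We note in passing that the kernel is exactly the constants, though the argument does not require this.
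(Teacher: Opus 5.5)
Your proposal is correct and follows the paper's proof: the same two-valued test function $f=\mathbf{1}_S-p$, the same checks $\sum_{\dO}g=0$ and $\sum_{\dO}g^2=p(1-p)|\dO|$, the energy count $\mathcal{E}_G(f)=|E(S,V\setminus S)|$, and Lemma~\ref{lem:harmmin} combined with min--max. Your explicit choice $W=\mathrm{span}\{\mathbf{1},g\}$, where the cross terms vanish because $\Lambda\mathbf{1}=0$, usefully spells out the min--max step that the paper leaves implicit.
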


\begin{proof}
Define $f\equiv(1-p)$ on $S$ and $f\equiv -p$ on $V\setminus S$.
Then $\sum_{\dO}f=0$, $\sum_{\dO}f^2=p(1-p)|\dO|$, and $\mathcal{E}_G(f)=|E(S,V\setminus S)|$.
Harmonic extension can only decrease energy, so the Rayleigh quotient is at most the computed value.
\end{proof}

\begin{lemma}\label{lem:centroid}
Let $T$ be a tree and $\dO$ its set of leaves.
There exists a vertex $v$ such that each component $C$ of $T\setminus\{v\}$ satisfies $|\dO\cap C|\le |\dO|/2$.
\end{lemma}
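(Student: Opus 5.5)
The plan is the standard centroid argument, adapted so that it weighs leaves instead of all vertices. Every vertex $v$ defines a maximal branch weight
\[
w(v):=\max_{C}|\dO\cap C|,
\]
the maximum taken over the components $C$ of $T\setminus\{v\}$. We choose $v$ minimizing $w(v)$ and show that $w(v)\le|\dO|/2$.

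Suppose instead that $w(v)>|\dO|/2$ for this minimizer, and let $C$ be a component with $|\dO\cap C|>|\dO|/2$. Because $T$ is a tree, $C$ contains exactly one neighbour $u$ of $v$. We then examine the components of $T\setminus\{u\}$, which are of two kinds.

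First, there is the component containing $v$. It is exactly $V\setminus C$, since removing $u$ separates $v$ from $C\setminus\{u\}$ and leaves everything outside $C$ attached to $v$. It therefore carries $|\dO|-|\dO\cap C|<|\dO|/2<|\dO\cap C|$ leaves.

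Second, there are the components lying inside $C\setminus\{u\}$. Their leaf counts sum to $|\dO\cap C|-[u\in\dO]\le|\dO\cap C|$. When $u$ has at least two such components, or when $u$ is itself a leaf, each component has strictly fewer than $|\dO\cap C|$ leaves.

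The only delicate case is when $u$ is an interior vertex of degree $2$. Then $C\setminus\{u\}$ is a single component with exactly $|\dO\cap C|$ leaves, so $w(u)=w(v)$ and there is no strict decrease. To handle this I will not rely on strict minimization alone. Instead I will walk: replace $v$ by $u$ and repeat. At each step the heavy component shrinks strictly as a vertex set, because the new heavy component is $C\setminus\{u\}$, a proper subset of $C$. Meanwhile its leaf count never increases.

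The walk therefore terminates, and it can only stop in one of two ways. Either the heavy component has at most $|\dO|/2$ leaves, or $u$ is of the non-degenerate type, in which case $w(u)<w(v)$. The second outcome contradicts minimality, or, if we started the walk from the minimizer, it is simply one more step of progress.

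The cleanest way to write this is to take $v$ minimizing the pair $(w(v),|C_{\max}(v)|)$ lexicographically, where $|C_{\max}(v)|$ is the vertex count of the heaviest component. Every move to $u$ strictly decreases this pair whenever $w(v)>|\dO|/2$, and that gives the contradiction.

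One edge case is worth recording. If the walk reaches a leaf $u$, then $T\setminus\{u\}$ has a single component, the one containing $v$, and it has fewer than $|\dO|/2$ leaves. So $w(u)<|\dO|/2$, which is consistent with the claim.

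The main obstacle is the degree-$2$ non-strictness just described, and the lexicographic (or vertex-count) tie-breaker resolves it.
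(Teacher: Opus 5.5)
Your argument is correct and is the weighted-centroid argument (weight $1$ on leaves, $0$ on interior vertices) that the paper invokes without details; your lexicographic tie-breaker on $(w(v),|C_{\max}(v)|)$ correctly handles the zero-weight degree-$2$ vertices, where minimizing $w$ alone gives no strict decrease. The one fact you use without stating it is that, when $\deg u\ge 3$, each branch of $T\setminus\{u\}$ inside $C$ contains at least one leaf of $T$ (for instance its vertex farthest from $u$); this is what makes each such branch carry strictly fewer than $|\dO\cap C|$ leaves.
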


\begin{proof}
Assign weight $1$ to each leaf and $0$ to each interior vertex.
A standard weighted centroid argument for trees applies.
\end{proof}

\begin{lemma}\label{lem:vertex2value}
Let $T$ be a tree with leaf boundary $\dO$ and $|\dO|=\ell$.
Fix a vertex $v$ of degree $r\ge 2$ and let $C_1,\dots,C_r$ be the components of $T\setminus\{v\}$.
Write $b_i:=|\dO\cap C_i|$ and fix an index $i_0$.
Set $s:=b_{i_0}$ and assume $0<s\le\ell/2$.
Then
\begin{equation}\label{eq:vertex2value}
\lambda_2(T,\dO)\le \frac{r-1}{r}\cdot\frac{\ell}{s(\ell-s)}.
\end{equation}
\end{lemma}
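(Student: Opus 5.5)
The plan is to bound $\lambda_2$ by the Rayleigh quotient of a single explicit test function that is orthogonal to the constants on $\dO$. The function takes one value on the branch $C_{i_0}$, another value on all the other branches, and a free value at $v$ that is chosen optimally. This is a refinement of Lemma~\ref{lem:cut}. In Lemma~\ref{lem:cut} the vertex $v$ is forced to agree with one side of the cut. Here $v$ instead sits at an intermediate value, which spreads the energy jump over the $r$ edges at $v$ in the cheapest way.

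\emph{Step 1 (the test function).} Define $f\in\RR^V$ by $f\equiv x$ on every vertex of $C_{i_0}$, $f\equiv y$ on every vertex of $C_i$ for $i\ne i_0$, and $f(v)=c$. Since each $C_i$ is joined to $v$ by exactly one edge and $f$ is constant on each component, the only edges with nonzero contribution are the $r$ edges at $v$. Hence
\[
\mathcal{E}_T(f)=(x-c)^2+(r-1)(y-c)^2 .
\]
Minimizing over $c$ gives $c=(x+(r-1)y)/r$ and $\mathcal{E}_T(f)=\frac{r-1}{r}(x-y)^2$.

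\emph{Step 2 (normalization).} Put $x=\ell-s$ and $y=-s$. Then $\sum_{\dO}f=s(\ell-s)-(\ell-s)s=0$. We need $\ell-s>0$, which holds because $s\le \ell/2$; in fact every other component $C_i$ contains a leaf of $T$, so $\ell-s\ge r-1\ge1$. We also get
\[
\sum_{\dO}f^2=s(\ell-s)^2+(\ell-s)s^2=s(\ell-s)\ell ,
\]
and $(x-y)^2=\ell^2$.

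\emph{Step 3 (min--max).} Let $g=f|_{\dO}$ and apply \eqref{eq:minmax_formal} with $W=\mathrm{span}\{\mathbf 1,g\}$. The harmonic extension of $\mathbf 1$ is constant and has zero energy, so the cross term in $\mathcal{E}_T(\widehat{\alpha\mathbf 1+\beta g})$ vanishes. Since $g\perp\mathbf 1$, the maximum of $\mathcal{R}$ over $W$ equals $\mathcal{R}(g)$. By Lemma~\ref{lem:harmmin},
\[
\mathcal{R}(g)\le \frac{\mathcal{E}_T(f)}{\sum_{\dO}g^2}=\frac{r-1}{r}\cdot\frac{\ell^2}{s(\ell-s)\ell},
\]
which is exactly \eqref{eq:vertex2value}.

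The only delicate point is the optimization of $c$ in Step 1, since it is what produces the factor $(r-1)/r$ instead of $1$. This is an elementary quadratic minimization, so I expect no real obstacle. The rest is bookkeeping: checking that $f$ is constant across the whole component, including its interior vertices, so that no other edges contribute to the energy.
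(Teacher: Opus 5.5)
Your proposal is correct and follows the paper's proof: your test function is exactly the paper's one ($1$ on $C_{i_0}$, $-\alpha=-s/(\ell-s)$ on the other branches, optimal value at $v$) rescaled by $\ell-s$. The only difference is that you justify the min--max step explicitly with $W=\mathrm{span}\{\mathbf 1,g\}$, which the paper leaves implicit when it uses $g\perp\mathbf 1$.
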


\begin{proof}
Let $\alpha:=s/(\ell-s)$ and define the boundary data $g\in\RR^{\dO}$ by $g\equiv 1$ on $\dO\cap C_{i_0}$ and $g\equiv -\alpha$ on $\dO\setminus C_{i_0}$.
Then $\sum_{\dO}g=s-(\ell-s)\alpha=0$, so $g\perp\mathbf{1}$.

Extend $g$ to $f\in\RR^{V(T)}$ by setting $f\equiv 1$ on $C_{i_0}$, $f\equiv -\alpha$ on each $C_i$ for $i\neq i_0$, and choosing $f(v)$ to minimize the energy across the $r$ edges incident to $v$.
A direct minimization gives $f(v)=\frac{1-(r-1)\alpha}{r}$ and
\[
\mathcal{E}_T(f)=(f(v)-1)^2+(r-1)(f(v)+\alpha)^2=\frac{r-1}{r}\,(1+\alpha)^2.
\]
Moreover,
\[
\sum_{x\in\dO}g(x)^2=s+(\ell-s)\alpha^2=\frac{s\ell}{\ell-s}.
\]
By Lemma~\ref{lem:harmmin}, the harmonic extension has energy at most $\mathcal{E}_T(f)$.
Using $1+\alpha=\ell/(\ell-s)$, we obtain
\[
\mathcal{R}_{T,\dO}(g)\le \frac{\frac{r-1}{r}\cdot\frac{\ell^2}{(\ell-s)^2}}{\frac{s\ell}{\ell-s}}
=\frac{r-1}{r}\cdot\frac{\ell}{s(\ell-s)}.
\]
By the min--max principle, $\lambda_2\le \mathcal{R}_{T,\dO}(g)$, proving~\eqref{eq:vertex2value}.
\end{proof}

\begin{proof}[Proof of Theorem~\ref{thm:lambda2sharp}]
Let $v$ be a leaf centroid as in Lemma~\ref{lem:centroid}.
Let $C_1,\dots,C_r$ be the components of $T\setminus\{v\}$, so $r=\deg(v)\le D$.
Set $b_i:=|\dO\cap C_i|$.
By the centroid property, each $b_i\le\ell/2$.
Let $i_0$ maximize $b_i$ and write $s:=b_{i_0}$.
By pigeonhole, $s\ge\ell/r$.

Apply Lemma~\ref{lem:vertex2value} at $v$ and $C_{i_0}$ to obtain
$$
\lambda_2\le \frac{r-1}{r}\cdot\frac{\ell}{s(\ell-s)}.
$$
Since $s\le\ell/2$, the function $x\mapsto\ell/(x(\ell-x))$ is decreasing on $(0,\ell/2]$, hence
$$
\frac{\ell}{s(\ell-s)}\le \frac{\ell}{(\ell/r)(\ell-\ell/r)}=\frac{r^2}{\ell(r-1)}.
$$
Therefore $\lambda_2\le \frac{r-1}{r}\cdot\frac{r^2}{\ell(r-1)}=\frac{r}{\ell}\le\frac{D}{\ell}$.

For the equality statement, equality throughout forces
$
r=D, s=\frac{\ell}{r},
$
and the harmonic extension must have the same energy as the piecewise--constant extension used in
Lemma~\ref{lem:vertex2value}. Hence the function $f$ constructed in Lemma~\ref{lem:vertex2value}, which is constant on each component
$C_i$, must itself be harmonic on $\Om$.

We claim that each component $C_i$ contains no interior vertex. Indeed, suppose that some $C_i$
contains an interior vertex. Since $C_i$ is a component of $T\setminus\{v\}$, there is a unique vertex
$w\in C_i$ adjacent to $v$. This vertex $w$ lies in $\Om$, because all boundary vertices are leaves
and $w$ has the neighbor $v$ together with at least one neighbor inside $C_i$. By construction, $f$
is constant on $C_i$, so every neighbor of $w$ inside $C_i$ has the same value as $f(w)$. On the other
hand, the value at $v$ is different from the constant value on $C_i$. Therefore
$$
\sum_{w'\sim w}\bigl(f(w)-f(w')\bigr)=f(w)-f(v)\neq 0,
$$
so $f$ is not harmonic at $w$, a contradiction.

Thus every $C_i$ contains no interior vertex, and hence every $C_i$ consists of a single boundary
leaf. Therefore $T$ is a star. Conversely, for the star $K_{1,\ell}$ we have
$
\lambda_2=1=\frac{D}{\ell},
$
since $D=\ell$. Hence equality holds in~\eqref{eq:mainbound_intro}.
\end{proof}

\subsection{General \texorpdfstring{$\lambda_k$}{lambda\_k} bound}\label{subsec:lambdak}

\begin{proof}[Proof of Theorem~\ref{thm:lambdak}]
The bound $\lambda_k\le 1$ holds by Corollary~\ref{cor:leq1}.
Assume $\ell\ge 4Dk$. Let  $t=\lfloor\ell/(4Dk)\rfloor\ge 1$ and $M:=4k$.
We recursively construct $M$ pairwise disjoint connected components $C_1,\dots,C_M$.
Let $U_1:=T$, rooted at an arbitrary vertex. For the $m$-step ($1\le m\le M-1$), assume the remaining tree is $U_m$ and that the previously removed components $C_1, \dots, C_{m-1}$ each strictly contain fewer than $Dt$ boundary leaves. Under this inductive assumption, the number of boundary leaves in $U_m$ satisfies  
$$|\dO\cap U_m|>\ell-(m-1)Dt\ge
\ell-(M-2)Dt\ge
\ell-(4k-2)\frac{\ell}{4k}=
\frac{\ell}{2k}\ge 2Dt>Dt.$$
Hence the root of $U_m$ must have a child-subtree containing at least $t$
boundary leaves, so the greedy downward walk cannot stop at the root. Continuing downward while the current vertex has a child-subtree containing at least $t$ boundary
leaves, we eventually terminate at a vertex $u_m$ since the tree is finite.
Let $u_m$ be the vertex where the walk stops; equivalently,
the descendant subtree $(U_m)_{u_m}$ contains at least $t$ boundary leaves,
but every child-subtree of $u_m$ in $U_m$ contains fewer than $t$ boundary leaves.
Then $u_m$ is a proper descendant, so $(U_m)_{u_m}$ is attached to
$U_m\setminus (U_m)_{u_m}$ by exactly one edge, and the latter is connected.
Moreover, $$t\le |\dO\cap (U_m)_{u_m}|<Dt.$$
Indeed, if $u_m$ is a leaf then $|\dO\cap (U_m)_{u_m}|=1<Dt$.
Otherwise $u_m$ has at most $D-1$ children in $U_m$, and each child-subtree
contains fewer than $t$ boundary leaves. Let $$C_m:=(U_m)_{u_m},
\qquad
U_{m+1}:=U_m\setminus C_m.$$ After $M-1=4k-1$ steps, the remaining connected component $U_M$ satisfies $$|\dO\cap U_M|
\ge \ell-(M-1)Dt\ge\frac{\ell}{4k}\ge t.$$ Let $C_M:=U_M$. Thus we obtain a partition of $T$ into $M=4k$ disjoint connected components $C_1,\dots,C_M$, each containing at least $t$
boundary leaves.

Let $Q$ be a quotient graph obtained by contracting each $C_i$ to a vertex. Since $T$ is a tree, $Q$ is also a tree with $M$ vertices.
Let $n_i$ be the number of vertices in $Q$ with degree of $i$. Since the sum of degrees is $2M-2$, we have
$$
2M-2 = n_1 + 2n_2 + \sum_{i\ge 3} i n_i \ge n_1 + 2n_2 + 3(M - n_1 - n_2).
$$
Rearranging gives $2n_1+n_2\ge M+2$, which strictly implies $n_1+n_2>\frac{M}{2}$. Let $S$ be the set of vertices in $Q$ with degree at most $2$. We have $|S| \ge M/2 = 2k$.  
The subgraph $Q[S]$ induced by $S$ has maximum degree at most $2$, so it is a collection of paths and isolated vertices. In any acyclic graph with maximum degree at most $2$, the independence number is at least half the number of its vertices. Thus, $Q[S]$ contains an independent set $I$ of size at least $|S|/2 \ge k$. We select exactly $k$ components corresponding to the vertices in $I$. By definition, these $k$ components are mutually non-adjacent in $Q$, and each has degree at most $2$ in $Q$.

For each $j\in I$, define boundary data $g_j\equiv 1$ on $\dO\cap C_j$ and $g_j\equiv 0$ elsewhere. Define its extension $f_j \equiv 1$ on $C_j$ and $0$ elsewhere. Let $W=\mathrm{span}\{g_j\}_{j\in I}$ which has dimension $k$. For any  $g=\sum_{j\in I} a_j g_j \in W$, consider the extension $f=\sum_{j\in I} a_j f_j$. Since $I$ is an independent set, no two selected components share an edge. The Dirichlet energy of $f$ is exclusively supported on edges connecting selected components to unselected components. Since each selected $C_j$ has degree at most $2$ in $Q$, it is connected to at most $2$ unselected components. Thus, cross-terms completely vanish, yielding:
$$\mathcal{E}_T(f)=\sum_{j\in I}\deg_{C_j}(a_j-0)^2\le 2\sum_{j\in I}a_j^2.$$
The denominator of the Rayleigh quotient is $\sum_{j\in I} a_j^2 |\dO\cap C_j| \ge t \sum_{j\in I} a_j^2$. By Lemma~\ref{lem:harmmin}, the Steklov Rayleigh quotient is bounded by:
$$\mathcal{R}_{T,\dO}(g)\le \frac{\mathcal{E}_T(f)}{t\sum_{j\in I} a_j^2}\le\frac{2}{t}$$
Using $\lfloor x \rfloor \ge x/2$ for all $x \ge 1$, we have $1/t \le 8Dk/\ell$. Therefore, $\mathcal{R}_{T,\dO}(g) \le 16Dk/\ell$. By the min--max principle, $\lambda_k \le 16Dk/\ell$.
\end{proof}

\begin{remark}\label{rem:lambdak_vs_lambda2}
For $k=2$, Theorem~\ref{thm:lambdak} yields $\lambda_2\le 32D/\ell$ when $\ell\ge 8D$.
The sharp estimate in Theorem~\ref{thm:lambda2sharp} improves this to $\lambda_2\le D/\ell$ for all leaf--boundary trees.
\end{remark}

\begin{remark}[On the constant for higher eigenvalues]\label{rem:lambdak_sharp}
The factor $16$ in the bound $\lambda_k\le 16Dk/\ell$ arises from the $4k$--partition and independent set argument.
For $k=2$, the leaf--centroid method of Theorem~\ref{thm:lambda2sharp} achieves the sharp constant~$1$, but it does not generalize directly to $k\ge 3$ because one would need to partition the boundary into $k$ balanced parts with few separating edges.
It would be interesting to determine whether the constant $16$ can be reduced for specific values of $k$, or whether there exist families of trees showing that it is tight (see Problem~\ref{prob:lambdak}).
\end{remark}

\section{Numerical experiments}\label{sec:numerics}

This section reports systematic computational checks supporting Theorem~\ref{thm:lambda2sharp} and exploratory evidence for the extremal trees conjectured by Lin--Zhao~\cite{LinZhaoPlanar}.
All computations use the exact Dirichlet--to--Neumann matrix $\Lambda=L_{\dO\dO}-L_{\dO\Om}L_{\Om\Om}^{-1}L_{\Om\dO}$, and $\lambda_2$ is obtained as the second smallest eigenvalue of $\Lambda$.

\subsection{Exhaustive verification of the sharp bound}

We enumerate all unlabeled trees with $3\le n\le 16$ vertices and compute the ratio $\rho(T):=\lambda_2(T,\dO)|\dO|/\Delta(T)$, where $\dO$ is the leaf set and $\Delta(T)$ is the maximum degree.
Table~\ref{tab:enum} shows that $\rho(T)\le 1$ in every case, and that equality occurs only for stars.

\begin{table}[ht]
\centering
\begin{tabular}{r r r r}
\hline
$n$ & \# trees & $\max\rho$ & \# equality cases\\
\hline
3  & 1     & 1.000000 & 1 \\
4  & 2     & 1.000000 & 1 \\
5  & 3     & 1.000000 & 1 \\
6  & 6     & 1.000000 & 1 \\
7  & 11    & 1.000000 & 1 \\
8  & 23    & 1.000000 & 1 \\
9  & 47    & 1.000000 & 1 \\
10 & 106   & 1.000000 & 1 \\
11 & 235   & 1.000000 & 1 \\
12 & 551   & 1.000000 & 1 \\
13 & 1301  & 1.000000 & 1 \\
14 & 3159  & 1.000000 & 1 \\
15 & 7741  & 1.000000 & 1 \\
16 & 19320 & 1.000000 & 1 \\
\hline
\end{tabular}
\caption{Exhaustive verification of $\rho:=\lambda_2|\dO|/\Delta\le 1$ on all unlabeled trees with $3\le n\le 16$.
The unique equality case for each $n$ is the star $K_{1,n-1}$.}
\label{tab:enum}
\end{table}

\subsection{Exhaustive search in \texorpdfstring{$\mathrm{TS}(\ell,D)$}{TS(l,D)}}

Recall that $\mathrm{TS}(\ell,D)$ denotes the family of trees with $\ell$ leaves and maximum degree at most $D$.
Lin--Zhao~\cite[Definition~3.5]{LinZhaoPlanar} defined a balanced minimum--height candidate $T_b^*(\ell,D)$ and conjectured that it maximizes $\lambda_2$ for fixed $(\ell,D)$ when $\ell$ is sufficiently large.
Tables~\ref{tab:TS3} and~\ref{tab:TS4} report exhaustive searches for $D=3,4$ and $3\le\ell\le 10$.

\begin{table}[ht]
\centering
\begin{tabular}{r r r c r}
\hline
$\ell$ & $\max\lambda_2$ & $\lambda_2(T_b^*)$ & match & \# trees\\
\hline
3  & 1.000000 & 1.000000 & $\checkmark$ & 1 \\
4  & 0.500000 & 0.500000 & $\checkmark$ & 1 \\
5  & 0.333333 & 0.333333 & $\checkmark$ & 1 \\
6  & 0.333333 & 0.333333 & $\checkmark$ & 2 \\
7  & 0.221638 & 0.221638 & $\checkmark$ & 2 \\
8  & 0.200000 & 0.179806 & $\times$     & 4 \\
9  & 0.179806 & 0.179806 & $\checkmark$ & 6 \\
10 & 0.156047 & 0.156047 & $\checkmark$ & 11 \\
\hline
\end{tabular}
\caption{Exhaustive search over $\mathrm{TS}(\ell,3)$. The column ``match'' indicates whether $T_b^*(\ell,3)$ achieves the maximum $\lambda_2$; $\checkmark$ = yes, $\times$ = no.}
\label{tab:TS3}
\end{table}

\begin{table}[ht]
\centering
\begin{tabular}{r r r c r}
\hline
$\ell$ & $\max\lambda_2$ & $\lambda_2(T_b^*)$ & match & \# trees\\
\hline
3  & 1.000000 & 1.000000 & $\checkmark$ & 1 \\
4  & 1.000000 & 1.000000 & $\checkmark$ & 3 \\
5  & 0.454545 & 0.454545 & $\checkmark$ & 5 \\
6  & 0.400000 & 0.333333 & $\times$     & 20 \\
7  & 0.333333 & 0.333333 & $\checkmark$ & 65 \\
8  & 0.333333 & 0.333333 & $\checkmark$ & 276 \\
9  & 0.272727 & 0.272727 & $\checkmark$ & 1189 \\
10 & 0.250000 & 0.250000 & $\checkmark$ & 5604 \\
\hline
\end{tabular}
\caption{Exhaustive search over $\mathrm{TS}(\ell,4)$.}
\label{tab:TS4}
\end{table}

For $D=3$, the first mismatch appears at $\ell=8$: the maximum $\lambda_2=0.2$ is attained by a tree with a degree--$3$ vertex splitting leaves as $(4,2,2)$, whereas $T_b^*(8,3)$ has a $(3,3,2)$ split and yields $\lambda_2\approx 0.1798$.
For $D=4$, the mismatch appears at $\ell=6$: the optimum $\lambda_2=0.4$ occurs for a split $(3,1,1,1)$, while $T_b^*(6,4)$ yields $\lambda_2=1/3$.
These observations show that the hypothesis ``$\ell$ sufficiently large'' in the Lin--Zhao conjecture is essential.

\begin{observation}\label{obs:mismatch}
Consider $\mathrm{TS}(8,3)$.
We have $\lambda_2(T_1)=1/5$ for the tree $T_1$ with leaf split $(4,2,2)$ at its unique degree--$3$ vertex, and $\lambda_2(T_b^*(8,3))=\frac{7-\sqrt{17}}{16}\approx 0.1798$.
Thus the balanced minimum--height tree is not optimal for $\ell=8$, $D=3$.
The same phenomenon occurs in $\mathrm{TS}(6,4)$.
The optimum and the balanced candidate are illustrated in Figure~\ref{fig:TS83}.
\end{observation}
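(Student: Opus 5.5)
We need to prove Observation: $\lambda_2(T_1)=1/5$ for the $(4,2,2)$ tree in TS(8,3), and $\lambda_2(T_b^*(8,3))=(7-\sqrt{17})/16$. Several trees in TS(8,3) have a $(4,2,2)$ split at a degree-3 vertex, so the plan fixes the most symmetric one. Take $T_1$ with center $c$ of degree 3. Two branches are cherries: a vertex adjacent to $c$ carrying two leaves. The third branch is a 4-leaf binary subtree: $c\sim a$, $a\sim a_1,a_2$, and each $a_i$ carries two leaves. This is the natural candidate, since all interior vertices have degree 3 and by Corollary~\ref{cor:deg2} we may assume no interior degree-2 vertices.

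The strategy is to compute the Dirichlet-to-Neumann spectrum via symmetry, in the spirit of the proof of Theorem~\ref{thm:levelspec}.

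First, antisymmetric data within any cherry (leaf values $\pm1$, zero elsewhere) gives eigenvalue $1$, since the cherry vertex gets value $0$. There are four such cherries, giving four eigenvalues equal to $1$.

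Second, data $\pm1$ on the two 2-leaf blocks under $a$ forces $f(a)=0$ by symmetry. Lemma~\ref{lem:flux} with $b_1=2$ then gives eigenvalue $1/3$.

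The remaining space is spanned by functions constant on the three branches of $c$, with values $x$ on the 4-block and $y,z$ on the two 2-blocks. Sum zero, $4x+2y+2z=0$, leaves two dimensions. The swap of the two cherries gives one of them, $x=0$, $y=-z$. Then $f(c)=0$, and Lemma~\ref{lem:flux} with $b_1=2$ gives eigenvalue $1/3$ again. For the last direction, take $y=z=-2x$ and solve the harmonic equations along the levels $a_i, a, c, \text{cherry vertex}$ as a small linear system, then read off the eigenvalue. I expect this to give $1/5$: with effective conductances, the 4-block hangs from $c$ through a resistance $1+\tfrac12\cdot\ldots$. It is easiest to solve directly. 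Then $\lambda_2=\min\{1/5,1/3,1\}=1/5$, provided the eigenvalue count $1+4+1+1+1=8$ matches. It does, so the spectrum is complete.

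For $T_b^*(8,3)$ I need Lin--Zhao's Definition~3.5. Its description here is a $(3,3,2)$ split at a degree-3 center. The natural realization: the center $c$ carries a cherry (2 leaves) and two 3-leaf branches. Each 3-leaf branch is a vertex $w$ adjacent to one leaf and to a cherry vertex carrying two leaves. I would confirm this matches the definition. The decomposition then runs as follows.

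Cherries again give eigenvalue $1$, three times. Inside each 3-branch, the values on the leaf directly at $w$ and on the cherry under $w$ can be balanced so that $f(w)=0$. This is not a symmetric configuration, so it is an honest $2\times2$ computation. Antisymmetric data across the two 3-branches (with $f(c)=0$ by symmetry) gives a $2\times2$ generalized eigenproblem, which is where the quadratic and $\sqrt{17}$ arise. The fully symmetric sector, constant $3$-branches against the cherry, gives one more eigenvalue.

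Concretely, I would parametrize the reduced data by unknowns on the leaves and eliminate the interior values. I would then compute $\Lambda$ restricted to each invariant subspace and find its smallest nonzero eigenvalue. The claimed value satisfies $8\lambda^2-7\lambda+1=0$ (check: roots $(7\pm\sqrt{17})/16$). So I expect one of the $2\times2$ sectors to produce exactly this quadratic. Finally I compare with the other sectors to confirm it is the minimum.

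The main obstacle is bookkeeping: identifying the exact shape of $T_b^*(8,3)$ and carrying the $2\times2$ eliminations correctly. The last remark in the statement, about $\mathrm{TS}(6,4)$, follows the same way. For the $(3,1,1,1)$ tree (a degree-4 center with three leaves and a vertex carrying three leaves), the symmetric sector gives $0.4$, while the $T_b^*(6,4)$ value $1/3$ comes from the table and an analogous computation.
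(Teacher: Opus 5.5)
Your symmetry-reduction method is the right tool, but the sector decomposition you propose for $T_b^*(8,3)$ is wrong. That decomposition is exactly the step that certifies which eigenvalue is smallest.

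\textbf{There is no ``balanced $f(w)=0$'' sector.} The two children of $w$ (a leaf and a cherry vertex) are not isomorphic, so nothing forces $f(w)=0$. Suppose you impose it on data supported in one $3$-branch. Then $f\equiv 0$ off that branch and $f(u)=2\alpha/3$. Harmonicity at $w$ then forces $\beta=-2\alpha/3$. The leaf ratios are $1/3$ on the cherry but $1$ at the leaf of $w$, so this is not an eigenvector.

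\textbf{The ``fully symmetric sector'' is not one-dimensional.} Data constant on each $3$-branch do not form an invariant subspace. A dimension count already flags the problem: your sectors give at least $1+3+2+2+1=9>8$ eigenvalues.

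\textbf{The correct split.} After the three cherry eigenvalues $1$, the five-dimensional space of cherry-constant data splits under the branch swap into two pieces. One is a two-dimensional odd sector. The other is a three-dimensional even sector containing $\mathbf{1}$.

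In the odd sector, use $f(c)=0$ and the boundary conditions $f(u)=\mu\alpha$, $f(w)=\mu\beta$ with $\mu=1-\lambda$. Harmonicity at $u$ and $w$ gives $(3\mu-2)\alpha=\mu\beta$ and $(3\mu-1)\beta=\mu\alpha$. Hence $8\mu^2-9\mu+2=0$, i.e.\ $8\lambda^2-7\lambda+1=0$, as you predicted.

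The even sector contributes \emph{two} nonzero eigenvalues, not one. Besides the root $\mu=1$ (the constants), they are the roots of $23\mu^2-25\mu+6=0$, i.e.\ $\lambda=(21\mp\sqrt{73})/46\approx 0.271,\,0.642$. Only with this do you know that $\lambda_2=(7-\sqrt{17})/16$. The full spectrum is $0,\frac{7-\sqrt{17}}{16},\frac{21-\sqrt{73}}{46},\frac{21+\sqrt{73}}{46},\frac{7+\sqrt{17}}{16},1,1,1$, and its sum agrees with the trace of $\Lambda$.

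\textbf{A slip for $T_1$.} The direction $y=z=-2x$ violates your own constraint $4x+2y+2z=0$. The last direction must be $y=z=-x$; it is odd under the automorphism exchanging the two halves of $T_1$ across the edge $ac$. With $x=1$ you get $f(a_i)=\tfrac{4}{5}$, $f(a)=\tfrac{2}{5}$, $f(c)=-\tfrac{2}{5}$, $f(b_i)=-\tfrac{4}{5}$. The flux is $\pm\tfrac{1}{5}$ at every leaf, so the eigenvalue is $1/5$. With $y=z=-2x$ the leaf ratios would instead be $3/10$ and $3/20$, so that vector is not an eigenvector.

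Your caution about several $(4,2,2)$ trees is justified. The spider with legs $1,1,3$ also has such a split, and $T_1$ has six degree-$3$ vertices, so the statement's ``unique'' is inaccurate. Reading $T_1$ off Figure~\ref{fig:TS83} is the right choice.

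\textbf{The $\mathrm{TS}(6,4)$ claim.} Your double-star value $2/5$ is correct. However, ``$1/3$ comes from the table'' is not an argument. You must write down $T_b^*(6,4)$ from Lin--Zhao's definition and compute its spectrum. For instance, $T(3,2)$ gives $1/3$ by Theorem~\ref{thm:levelspec}, and the height-two tree with root split $(2,2,1,1)$ has spectrum $0,\tfrac{1}{3},\tfrac{3}{5},1,1,1$.

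With these repairs your route gives an exact proof. The paper itself only supports the Observation by numerically computing the Dirichlet-to-Neumann matrix.
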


\begin{figure}[ht]
\centering
\begin{tikzpicture}\begin{scope}[
    ivert/.style={circle, draw, fill=black, inner sep=1.5pt},
    lvert/.style={circle, draw, fill=white, inner sep=2pt},
    edge from parent/.style={draw, thick},
    level distance=12mm,
    level 1/.style={sibling distance=20mm},
    level 2/.style={sibling distance=15mm},
    level 3/.style={sibling distance=10mm},
]
\node at (0,0.8) {$T_1$: split $(4,2,2)$};
\node[ivert] (r1) at (0,0) {}
    child { node[ivert] {} 
        child { node[ivert] {}
            child { node[lvert] {} }
            child { node[lvert] {} }
        }
        child { node[ivert] {}
            child { node[lvert] {} }
            child { node[lvert] {} }
        }
    }
    child { node[ivert] {}
        child { node[lvert] {} }
        child { node[lvert] {} }
    }
    child { node[ivert] {}
        child { node[lvert] {} }
        child { node[lvert] {} }  
    };
\node at (0,-4.2) {\footnotesize $\lambda_2=1/5=0.2$};
\end{scope}
\begin{scope}[
    ivert/.style={circle, draw, fill=black, inner sep=1.5pt},
    lvert/.style={circle, draw, fill=white, inner sep=2pt},
    edge from parent/.style={draw, thick},
    level distance=12mm,
    level 1/.style={sibling distance=20mm},
    level 2/.style={sibling distance=10mm},
]\node at (9,0.8) {$T_b^*(8,3)$: split $(3,3,2)$};
\node[ivert] (r2) at (9,0) {}
    child { node[ivert] {}
        child { node[ivert] {}
            child { node[lvert] {} }
            child { node[lvert] {} }  
        }
        child { node[lvert] {} }
    }
    child { node[ivert] {}
        child { node[ivert] {}
            child { node[lvert] {} }
            child { node[lvert] {} }  
        }
        child { node[lvert] {} }
    }
    child { node[ivert] {} 
        child { node[lvert] {} }
        child { node[lvert] {} }
    };
\node at (9,-4.2) {\footnotesize $\lambda_2=\frac{7-\sqrt{17}}{16}\approx 0.180$};\end{scope}
\end{tikzpicture}
\caption{The two key trees in $\mathrm{TS}(8,3)$. Interior vertices are filled (black); leaves (boundary) are open (white). The unbalanced tree $T_1$ (left) achieves a larger $\lambda_2$ than the balanced tree $T_b^*$ (right).}
\label{fig:TS83}
\end{figure}
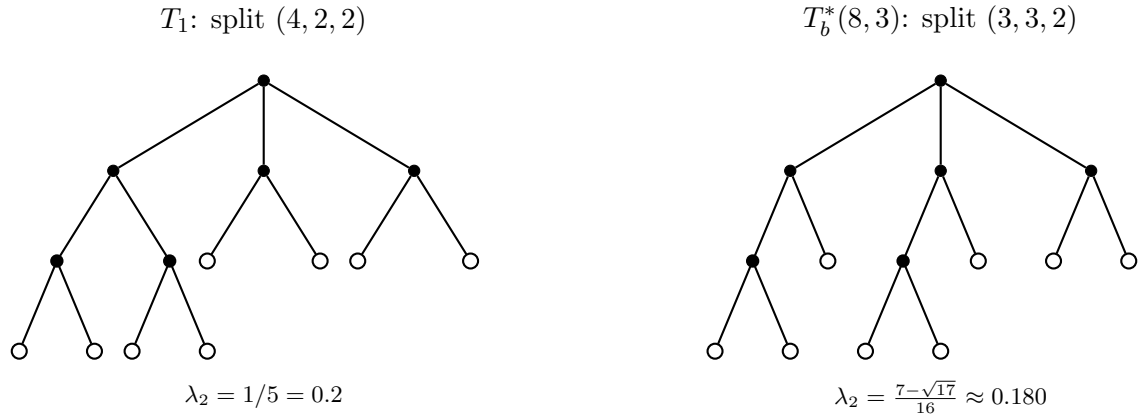

\subsection{Randomized experiments for larger leaf counts}

To complement the exhaustive regime, we generate random bounded--degree trees with a prescribed leaf count by recursively splitting leaf--counts.
Table~\ref{tab:random} records the best $\lambda_2$ observed among $40$ random samples for several $(\ell,D)$ and compares it to $\lambda_2(T_b^*(\ell,D))$.
In these trials, the balanced tree $T_b^*$ consistently has larger $\lambda_2$ than the best random sample, providing additional evidence for the Lin--Zhao conjecture beyond the range of exhaustive enumeration.

\begin{table}[ht]
\centering
\begin{tabular}{r r r r r r}
\hline
$D$ & $\ell$ & samples & max $\lambda_2$ (rand) & $\lambda_2(T_b^*)$ & max $\rho$ (rand)\\
\hline
3 & 20 & 40 & 0.068526 & 0.074610 & 0.457 \\
3 & 40 & 40 & 0.026111 & 0.037469 & 0.348 \\
4 & 20 & 40 & 0.091786 & 0.129844 & 0.459 \\
4 & 40 & 40 & 0.042099 & 0.066944 & 0.421 \\
5 & 20 & 40 & 0.111753 & 0.200000 & 0.447 \\
\hline
\end{tabular}
\caption{Randomized experiments.
The quantity $\rho:=\lambda_2\ell/D$ is bounded by $1$ in Theorem~\ref{thm:lambda2sharp}.}
\label{tab:random}
\end{table}

\paragraph{Reproducibility.}
All experiments were implemented in Python using NumPy for linear algebra and NetworkX for tree enumeration.
Code for reproducing the tables is available from the authors upon request.

\section{Concluding remarks}\label{sec:conclude}

We have proved that Steklov eigenvalues decrease monotonically under the edge--stretching operation. Moreover, we established a sharp upper bound $\lambda_2\le D/\ell$ for trees with leaf boundaries, derived explicit spectra for level-regular trees, and provided a general bound for higher eigenvalues $\lambda_k$.
These results fit into the extremal program for $\lambda_2$ on trees with constraints on leaf count and maximum degree:
\begin{itemize}[leftmargin=2em]
\item Corollary~\ref{cor:deg2} reduces the search for extremizers to homeomorphically irreducible trees.
\item Theorem~\ref{thm:levelspec} provides a closed formula for an important candidate family, allowing an exact comparison.
\item Theorems~\ref{thm:lambda2sharp} and~\ref{thm:lambdak} give upper benchmarks that any candidate must respect.
\end{itemize}
A natural next step is to combine these tools with structural transformations that move leaves between branches while controlling the DtN form, in order to approach conjectured extremizers such as balanced minimum--height trees~\cite{LinZhaoPlanar}.

It is natural to ask similar questions on other graph families.
Monotonicity of Steklov eigenvalues with respect to the addition of edges (see~\cite[Theorem~3.1]{LinZhaoPlanar}) justifies the consideration of the problem on subgraph--closed graph families.

\begin{problem}\label{prob:TS}
Let $\mathrm{TS}(\ell,D)$ be the set of all trees with $\ell$ leaves and maximum degree at most $D$.
Determine the trees with maximum $\lambda_2$ among $\mathrm{TS}(\ell,D)$.
\end{problem}

\begin{problem}\label{prob:lambdak}
Determine the sharp constant in the inequality $\lambda_k\le c_k^*\,Dk/\ell$ for $k\ge 3$.
\end{problem}

\end{document}